\theoremstyle{definition}
\newtheorem{teo}{Theorem}
\newtheorem{lema}{Lemma}
\newtheorem{cor}{Corollary}
\newtheorem{remark}{Remark}
\def\Nset{{\mathbb{N} } }
\def\E{{\mathbb{E} } }
\def\P{\mathbb{P}}
\def\p{{\bf p}}
\def\q{{\bf q}}
\def\Q{{\cal Q}}
\def\nn{  {\nonumber}  }
\title{Large Deviation inequalities  for sums of positive correlated variables with  clustering }
\author{ 
        Miguel Abadi 
        \thanks{Instituto de Matemática, Estatística e Ciencia da Computação,, Universidade de São Paulo, São Paulo, Brasil }}
\date{\empty}
\begin{document}

\maketitle
\tableofcontents
\begin{abstract}
Large deviation inequalities for ergodic sums is an important subject  since the seminal contribution of Bernstein
for independent random variables with finite variances,  followed by  the Chernoff's method and the Hoefding result for independent bounded variables.
Very few results appears in the literature for the non independent case.
Here we consider the,  barely treated in the literature, case of positively correlated   Bernoulli variables.
This case represents the appearance in clusters of a certain fixed phenomena in the overlying stochastic process.
Under a very mild condition we prove several upper  deviation inequalities.
The results follow by a spectral decomposition of an appropriated recursive operator.
We illustrate with examples.
\end{abstract}

\section{Introduction}
In 1924, in his seminal paper, \emph{} Bernstein proved the following  deviation inequality. 
Let $X_0,\dots,X_{n-1}$ be a sequence of independent random variables with
\[
\mathbb{E}[X_i] = 0, 
\qquad 
|X_i| \le M \quad \text{a.s.}, 
\qquad 
\mathrm{Var}(X_i) = \sigma_i^2.
\]
Set
\[
S_n = \sum_{i=9}^{n-1} X_i, 
\qquad 
\sigma^2 = \sum_{i=0}^{n-1} \sigma_i^2.
\]
Then, for all  $\epsilon > 0$:
\[
\Pr\!\left( S_n \ge \epsilon \right) 
\;\le\; 
\exp\!\left( -\frac{\epsilon^2}{2  [\sigma^2 + \tfrac{M}{3}  \epsilon]} \right).
\]
Its application on Bernoulli($p$) variables reads as follows. Set $N_n=S_n$, then
\[
\Pr\!\left( N_n \ge (p+\epsilon) n \right) 
\;\le\; 
\exp\!\left( -\frac{\epsilon^2  n}{2  [p(1-p) +  \epsilon/3]}   \right).
\]
We use the letter $N_n$ to emphasize that in this case $S_n$ is the counting of successes along the sample $X_0,\dots, X_{n-1}.$  
The expression \emph{large deviations} stands to mean deviations of the order of the mean of $N_n$.
The interpretation of the above inequality is the following. One can identify two different regimes:
(1) For \emph{mild} large deviations, namely $\epsilon/3 \ll p(1-p)$, the bound becomes
        \[
    \Pr(N_n \ge (p+\epsilon)n  ) \;\lesssim\; \exp\!\left(-\frac{ \epsilon^2 n}{ 2p(1-p) }\right),
    \]
    that means a \emph{sub-Gaussian} tail for $N_n$.
   (2) For \emph{very} large deviations,     
    \[
    \Pr(N_n \ge (p+\epsilon)n ) \;\lesssim\; \exp\!\left(-\frac{ 3\epsilon n }{2}\right) , 
    \]
   that means  \emph{sub-exponential} tail.

Since then, many works were devoted to extend this inequality to various setting.
Still  in the case of independent summands, some of the most important contributions are those of Cramer, Feller, Chernoff , Okamoto , Bennett,  Hoeffding, 
which provide inequalities  for uniformly bounded, individually bounded, bounded differences, bounded moments, etc \cite{bennett, chernoff, cramer, feller, hoeffding,  okamoto}.

\vskip0.15cm
Yet a major challenge is to extend these results to dependent summands, even in the Bernoulli case.
Assuming extra condition,  several results appear in the literature, using different methods.
For instance, Janson proved an inequality for $m$-dependent variables, 
Merlevède et al. did the same for $\alpha$-mixing processes, 
and  Lezaud and  Paulin have results  for ergodic Markov chains \cite{janson, merlevede, lezaud, paulin}.
There are also further extensions for martingales by Azuma (Azuma--Hoeffding  inequality),
negatively correlated $\phi$-mixing variables  by Lu et al. 
and a recent paper by Mikosch   and Rodionov  for subexponential variables
\cite{azuma, lu, mikosch}.
As fundamental  survey we refer to the nice papers of  Janson \cite{janson2} and the references therein. \\

The goal of this paper is to present  several inequalities for the large deviation of a sum of indicator random variables which extends previous situations in two directions. 
(1) There is no asymptotic independence among distant variables. 
(2) The method  is designed  specially for, but not limited to,  positive correlated variables.
That is,  the occurrence  of an observable has  tendency to appear in clusters rather than  isolated. 
 Namely, the occurrence of  a second one is favoured by the occurrence of a previous one jn the immediate past.

The motivation comes from treating practical problems in which a particular phenomena rarely occurs, say with small probability $p$, but once it happens, immediate repetitions occurs with 
medium probability, say $\rho$.  For instance, large values of  a share on the stock exchange market,  high or low values of temperatures in a given region, 
large values of contaminated people of a given disease, number of days a strike last, number of days of replicas of an earthquake, etc.
For intuition the reader could have  in mind $\rho >  p$.
The size of the cluster of occurrences, even  assumed to have finite expectation,
is assumed to be unbounded. 
This fact is the source of a major difficulty  in the proof and represents a major contribution   with respect to previous results that assume the random sequence  being  bounded in some way.
Heuristically, the sample can be described as follows.
The  sum corresponds to the number of occurrences of an observable $A$ with tendency to appear in clusters, 
and  the mean number of observations  must respect a law of large numbers (or the Ergodic Theorem).
To that, if the occurrence of $A$ comes in clusters, 
the bigger the clusters, the larger the distance between them.
This situation makes bigger fluctuations and more subtle deviations.

Further, our results apply not only to clusters of  \emph{immediately consecutive}  repetitions. 
The observable  may be periodic of fixed period  strictly larger than one,
or even could be defined by the  occurrences of the observable within a distance up to a maximum  value $q$. 
For instance, in the periodic case, consider an indefinite strike   in a factory that has a production process from Monday to Saturday,
however  one is interested only in  the specific control production process   that runs on tueday and and friday.
Or a professor that goes through an illness and the school is interested in the number of days without  his math lectures that take place only in Monday, Wednesday and Friday.
As an example of the stretchy  cluster,  consider the example of the replicas of an earthquake.
The days with replicas not necessarily occur every day until  the movement of the tectonic plate stops. It
may occur days with an days without replicas, but all of them are consequence of the same. 
One would like  to consider all the days with replicas (not all the days) within a unique cluster.
The number of days between two subsequent  days with replicas  could vary. 
In that sense, a further example is in  statistics of gene occurrence.
Suppose one is interested in the gene {\tt ACGAATACG}. 
Once it is found in the sample, the occurrence of that gene once again immediately  (overlapped in {\tt ACG} ) is favored.  Our results apply then to estimate the samples that can overestimate the number of occurrences of that gene. \\

Since the publication of the fundamental Chernoff method,  much of the work  to obtain a good inequality relays on dealing with the moment generating function (MGF) of the variables.
As we mentioned above, this was overcome  imposing extra conditions on the variables.
In our result we do not ask for such restrictions.
The novelty of our argument for the  proof of the deviation relays on four fundamental ingredients acting together:
(1) We construct a recursive operator based on the clustered structure of  of occurrences of the phenomena.
(2) A spectral decomposition of that recursive operator.
(3)  A partial interpolation of the MGF of the unbounded cluster:  quadratic interpolation (Lemma \ref{quartic}) and  linear interpolation (Lemma \ref{cubic}), together with  quadratic  interpolation of the indicator of a cluster.
(4) Optimization of the region of interpolation.

Our first result, Theorem \ref{teoQ}, provides the general form  of the upper bound for the deviation.
It is of theoretical character, in the sense that an analytic explicit expression envolves the solution of the roots 
of a quartic polynomial.  
However it is feasible to implement it numerically.
Our second resut, Theorem \ref{teoR}, presents several explicit bounds that allow  interpretation   with respect to the parameters of the model. We still observe the dichotomy mentioned  at the beggining of this introduction 
when presenting   Bernstein'inequality.

The paper is organized as follows. In Chapter 2 we present the framework. Chapter 3 contains both, the main results and examples. Chapter 4 is devoted to the proofs. Chapter 5 is the appendix that collects classical results.

\section{Definitions}

\subsection{Framework}

We consider an ergodic  stationary stochastic  process $(X_n)_{n \in \mathbb{N}}$ over a finite or countable  alphabet  $\mathcal{A}$.
The space $\mathcal{A^\mathbb N}$ is endowed with the $\sigma$-algebra generated by finite strings and with the stationary measure $\P$. 
The shift to the left  $\sigma: \mathcal{A^\mathbb N}\to  \mathcal{A^\mathbb N}$ is the function that for any $w\in  \mathcal{A^\mathbb N}, w=(w_j)_{j\in \mathbb{N}}$
acts  as $(\sigma(w))_j= w_{j+1}$ for all $j\in \mathbb{N}$.

Let $A\in\mathcal{F}_0^{k-1}$ be a positive measure  set,  defined on the sub-$\sigma$-algebra generated by $X_0,\dots,X_{k-1}$.
A cluster of occurrences of $A$ is a group of close observations of the event  $A$ along the sample $X_0,\dots.X_{n-1}$.  To formalise.  close will mean a distance up to $ q \in\Nset$ from one occurrence  of $A$, up to the next. 
In or work, the allowed maximum distance $q$ between consecutive occurrences of $A$ is  arbitrary and fixed. 

 To simplify notation put   $I_j= \mathbbm{1}_{  \sigma^{-j}(A) }$ for the indicator function of the event $A$ happening at time $j\in\mathbb N$.
 Thus we say that there is a cluster of $A$ between times $m$ and $m'$ if the following conditions hold simultaneously:
 \begin{enumerate}
 \item $I_m=1$ and $I_{m'}=1$,
\item $I_j=0$ for all ${m-q} \le j \le  {m-1}$ and all ${m'+1} \le j \le  {m'+q}$,
\item for every $j_0$ such that $m \le j_0 \le m'$ with  $I_{j_0}=1,$ there exists (at least one)  $j_1$ with $|j_0-j_1|\le q$ such that  $I_{j_1}=1$.  
\end{enumerate}
The counting function of the number of occurrences of $A$  from time  (or space) $m$ to $m'\ge m$  in the sample $X_0^{n-1}$ is given by
\begin{equation} \label{defYn}
N_{m}^{m'} =   \sum_{j=m}^{m'} I_j \ .
\end{equation}
Specifically, we  write $N_{m}=N_0^{m-1}$ for the counting over $m$ observations starting from the origin. \\

\subsection{The problem}

\emph{Suppose $I_0=1$,} say  an occurrence of $A$ is observed at the beginning of the sample.
The end of this first cluster in the sample is defined as the variable
\[
\tau = \inf \left\{  j_0 \in \mathbb{N} | \   I_{j_0}=1, \  \cap_{j=j_0+1}^{j_0+q }  \{ I_j=0 \} \right\} .
\]
Thus $N_1^\tau$  is the number of further occurrences of $A$ in the first observed cluster.
The large deviation property depends  on the behaviour of the moment generating function of the 
variable $N_1^\tau$. Since it is defined  for $I_0=1$, we call it the \emph{(reminig) cluster size}. Thus, we set
\[
M(\lambda) = \E(e^{\lambda N_1^\tau} \ | \ I_0=1 ) \ . 
\]

We assume the following  property, that control certain correlation.
Fix $q\in \Nset$.   
Define
 \[
 \Psi(q) := \sup_{r,s }   \frac{ \P \left(   N_{\tau+q+1}^{n-1}=s \  | \   I_0=1 ,   N_1^{\tau}=r  \right)  }{ 
                                             \P \left(   N_{\tau+q+1}^{n-1}=s \right) }    \ .
 \]
 We say that $X_0^{n-1}$ is a $\Psi(q)$-process  if $\Psi(q)<\infty$.

This is a kind of   $\psi(q)$ coefficient  in the classical $\psi$-mixing condition but much weaker in several senses. 
Firstly, the definition depends only on the functions $\tau, N_{\tau+g+1}^{n-1}$.
Thus it depends on the sub-$\sigma$-algebra generated by $A$ only, not on the full $\sigma$-algebra, as is traditionally defined $\psi$-mixing. 
This allows to apply our results to processes that have \emph{non uniform} decay of correlation, namely unbounded slower decay, depending on the region of the phase-space,  as is the case of the House of Cards Markov chain, renewal processes, regeneration processes   and the Manneville-Pomeau map \cite{asmussen, cox, gallo, manneville}.
Further, 
it is a one-side inequality, and just a single coefficient,  of  a kind of what is call $\psi^+$-mixing in the literature, 
We do not ask for the sequence of constants $\Psi(j), j\ge 0,$ to converge to one (cf. \cite{bradley, doukhan}). 
Of course, the closer, the better the result.

\section{Main results}

\subsection{Theorems}

Our main result will be established in terms of  $\mu_1$, the mean residual cluster size defined as
\[
\mu_1=\E(N_1^\tau | I_0=1) \ , 
\]
and 
the  following further    parameters.
For any $\lambda_0>0$ define
\[
 \theta_0=\theta(\lambda_0)=  \frac{e^{\lambda_0} -1 -\lambda_0}{\lambda_0^2} \ ,  \qquad  \kappa_0=\kappa(\lambda_0)=\frac{\E(e^{\lambda_0 N_1^\tau}  | I_0=1)-1  -\lambda_0 \mu_1 }{\lambda_0^2 } \ . 
\]
We emphasise that both parameters are increasing functions of $\lambda_0$ and diverge as  it does.
Yet,  $\theta_0$ is finite for every choice of $\lambda_0$. However, if the moment generating function of the residual size of the cluster $M(\lambda)$ has a singularity at, say $\lambda_s$, then $\kappa_0$ has a singularity there, too.
Furthermore, a first order approximation gives 
$1/2 < \theta_0 \approx 1/2 + \lambda_0/6$ and $\mu_2/2 < \kappa_0 \approx \mu_2/2 + \lambda_0\mu_3/6$, 
where $\mu_2=\E( (N_1^\tau)^2 | I_0=1)$ and $\mu_3=\E( (N_1^\tau)^3 | I_0=1)$ are respectively the second and third moment of the residual cluster size.\\



We put $p=\P(A)$, and fix $ 0<\epsilon < 1-p$.
 Our main result reads

\begin{teo} \label{teoQ} Let $(X_n)_{n\in\mathbb{N}}$ be a stationary $\Psi$-process with invariant measure $\P$.
Let $q\in \Nset$ and set  
$p^*=\Psi(q) p$.
Set $\lambda_0= \epsilon / p^*(1+2\mu_1)$ and consider the quartic polynomial 
\[
Q(\lambda) =  -  \epsilon  \lambda  +  p^* [   ( \theta_0  + \mu_1)   \lambda^2  +  (\theta_0\mu_1+\kappa_0)   \lambda^3  +   \theta_0 \kappa_0  \lambda^4  ] \ .
\]
 Let $\lambda_{\rm Q} =\arg\min_{0 < \lambda \le \lambda_0} Q(\lambda) $.
Then
\begin{equation} \label{main}
\P(N_n \ge (p+\epsilon) n ) \le C_0 e^{    Q(\lambda_{\rm Q}) n} +  \sum_{i=1}^{q-1} C_i z_i^{n}   e^{ -(p+\epsilon)n} \ .
\end{equation}
Here $z_i,  i=1,\dots,q-1$ are the secondary roots of the polynomial $a_p(z)=z^q-z^{q-1}-d$ 
with  constant $d=p^* (e^{\lambda_{\rm Q}}-1)M(\lambda_{\rm Q})$. 
Yet $C_i,  i=0,\dots,q-1$  are constants (with respect to $n$).
\end{teo}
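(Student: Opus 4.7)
I would follow the Chernoff paradigm, specialized to the clustered structure via a recurrence for the generating function $u_n(\lambda) := \E[e^{\lambda N_n}]$. For any $\lambda > 0$, Markov's inequality yields
$$
\P(N_n \ge (p+\epsilon)n) \le e^{-\lambda(p+\epsilon)n}\, u_n(\lambda),
$$
so the task reduces to bounding $u_n(\lambda)$ suitably.

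The first main step is to derive a linear recurrence. Writing $e^{\lambda I_0} = 1 + (e^\lambda-1) I_0$ and using stationarity gives
$$
u_n \le u_{n-1} + (e^\lambda-1)\, \E\!\bigl[I_0\, e^{\lambda N_1^{n-1}}\bigr].
$$
On the event $\{I_0=1\}$ the first cluster ends at $\tau$, after which $I_{\tau+1}=\cdots=I_{\tau+q}=0$ by definition of $\tau$, so $N_1^{n-1} = N_1^\tau + N_{\tau+q+1}^{n-1}$. The $\Psi(q)$ condition decouples the tail from the cluster, and stationarity bounds the tail MGF by $u_{n-\tau-q-1}\le u_{n-q}$ (worst case $\tau=0$). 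Averaging over the distribution of $N_1^\tau$ factors out $M(\lambda)$ and produces
$$
u_n \le u_{n-1} + d\, u_{n-q}, \qquad d := p^*(e^\lambda-1)M(\lambda).
$$

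The second main step is the spectral decomposition. The characteristic polynomial of this recurrence is exactly $a_p(z) = z^q-z^{q-1}-d$. Let $z_0 \in (1,\infty)$ be its dominant real root and $z_1,\dots,z_{q-1}$ the secondary roots (of strictly smaller modulus). The standard decomposition of linear recurrences yields
$$
u_n \le C_0\, z_0^n + \sum_{i=1}^{q-1} C_i\, z_i^n,
$$
where the constants $C_i$ are determined by the initial conditions (which in turn are controlled by $M(\lambda)$ and low-order moments of $N_1^\tau$).

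The third step converts $z_0^n$ into the explicit exponent $Q(\lambda_{\rm Q})n$. From $z_0^{q-1}(z_0-1)=d$ and $z_0\ge 1$ one reads off $\log z_0 \le z_0-1\le d$. Invoking the quadratic interpolation of Lemma \ref{quartic},
$$
e^\lambda-1 \le \lambda+\theta_0\lambda^2, \qquad M(\lambda)\le 1+\mu_1\lambda+\kappa_0\lambda^2, \qquad \lambda\in(0,\lambda_0],
$$
and expanding $d$ as a quartic in $\lambda$, the exponent $\log z_0 - \lambda(p+\epsilon)$ rearranges (after the bookkeeping between $p$ and $p^*$) into $Q(\lambda)$ as stated. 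Minimizing $Q$ over $(0,\lambda_0]$ yields $\lambda_{\rm Q}$; the endpoint $\lambda_0 = \epsilon/[p^*(1+2\mu_1)]$ is the largest value for which the quadratic interpolation of $M$ remains tight. Plugging back into the Chernoff bound assembles \eqref{main}.

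The main obstacle is Step 1: establishing the recursion with the specific constant $d = p^*(e^\lambda-1)M(\lambda)$. Because the residual cluster size $N_1^\tau$ is unbounded (only finite expectation is assumed), one cannot naively integrate it away; instead the quadratic interpolation of the cluster indicator (item (3) of the introduction) must be invoked together with the $\Psi(q)$-decoupling, so that $M(\lambda)$ -- rather than some larger object such as a variance or an unbounded higher moment -- emerges as the effective multiplier in front of $u_{n-q}$. Once the recursion is in place, the spectral analysis of $a_p$ and the ensuing quartic interpolation are essentially classical linear algebra and calculus.
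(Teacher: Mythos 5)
Your proposal mirrors the paper's own proof step for step: Chernoff bound, conditioning on $I_0$ and splitting the sum at the end of the first cluster, $\Psi(q)$-decoupling to obtain the recursion $u_n\le u_{n-1}+p^*(e^\lambda-1)M(\lambda)\,u_{n-q}$, spectral decomposition via the characteristic polynomial $a_p(z)=z^q-z^{q-1}-d$, and the quartic interpolation of Lemma \ref{quartic} to reduce the exponent to $Q(\lambda)$. The only cosmetic deviation is that you bound $\log z_0\le z_0-1\le d$ directly from $z_0^{q-1}(z_0-1)=d$ and $z_0\ge 1$, whereas the paper invokes Rouch\'e's theorem to get $|z_i|\le 1+F(\lambda)$ followed by $1+F\le e^F$; these routes are equivalent, and your remaining explanations (e.g.\ the role of $\lambda_0$) are slightly imprecise in attribution but do not affect the argument.
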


 \begin{remark}
 Explicit expressions (eq. (\ref{const1=})  and (\ref{const2=})) for the constants $C_i$'s  
and bounds  (eq. (\ref{const1bound0}), (\ref{const1boundj}) and (\ref{const2bound}))
will be given in the proof of the theorem. We put them outside the theorem for the clarity of the exposition.
They are the solutions of the equations system $VC=I$ where $V$ is the Vandermonde matrix given by the roots of the polynomial $a_p(z)$  
 and $I$ is the vector  of initial conditions   $I=(\E(e^{N_0}),\E(e^{N_1}),...,\E(e^{N_{q-1}}) )$. 
We will  further show that this solution can be  written as the sum of two vector $C'$ and $C''$. One. $C'$, is close to the vector $(1,0,...,0)$,
and $C''$   depends on the distance between  the vector  $I$, and the corresponding one computing the  expectations with respect to the product measure, $I'= ( 1,  \E(e^{N_1}),...,(\E(e^{N_1}))^{q-1} )$.
\end{remark}

\vskip0.5cm
Extensive analytical expression for $\lambda_Q$  is available through
Cardano-Tartaglia's formula for the roots of a cubic polynomial. We state it in the appendix for completeness. 
Even when not transparent at a glimpse,  it is numerically feasible,  after computing the several parameters defining $Q$.
The next  theorem provides more transparent inequalities. We  introduce  another  constant that plays the role of $\kappa_0$ in Theorem \ref{teoQ}.
Let
$$\kappa_{1} = \E\left(  \frac{e^{\lambda_0 N_1^\tau}-1}{\lambda_0} | I_0=1 \right) .$$

As  before, we note the first order approximation  $\kappa_1 \approx \mu_1+ \lambda_0\mu_2/2$.

\begin{teo} \label{teoR} Let $(X_n)_{n\in\mathbb{N}}$ be a stationary $\Psi$-process with invariant measure $\P$.
Set $\lambda_0= \epsilon / p^*(1+2\mu_1 )$. Consider the cubic polynomial 
\[
R(\lambda) =   -\epsilon \lambda + p^* [ ( \theta_0 + \kappa_1)  \lambda^2 +  \theta_0 \kappa_1   \lambda^3 ] \  .
\]
 Let $\lambda_{\rm R}  =\arg\min_{0 < \lambda \le \lambda_0} R(\lambda) $.
Then
\begin{eqnarray}
Q(\lambda_{\rm Q} )  &\le&    R(\lambda_{\rm R} )  \nn \\
&\le& - \frac{ {\epsilon}^2 }{2 [  \sqrt{ {p^*}^2(\theta_0 +\kappa_1)^2 +3 \theta_0 \kappa_1\  p^* \epsilon } 
+p^*(\theta_0 +\kappa_1)] }   \nn\\ 
&\le& - \frac{ {\epsilon}^2 }{4   \sqrt{ {p^*}^2(\theta_0 +\kappa_1)^2 +3 \theta_0 \kappa_1\  p^* \epsilon }  }   \label{explicit1} \\ 
&\le& - \frac{ {\epsilon}^2 }{4p^*(\theta_0+\kappa_1)  \sqrt{ 1 + 3\epsilon/4p^* } } \ .  \label{apply}
\end{eqnarray}
\end{teo}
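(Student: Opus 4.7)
The plan is to establish the four inequalities in the displayed chain sequentially, each by a different (but short) mechanism.

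For the first inequality $Q(\lambda_Q)\le R(\lambda_R)$, I would prove the pointwise bound $Q(\lambda)\le R(\lambda)$ on $[0,\lambda_0]$ and then combine it with the definitions of the minimizers: $Q(\lambda_Q)\le Q(\lambda_R)\le R(\lambda_R)$. The algebraic key is the identity $\kappa_1=\mu_1+\lambda_0\kappa_0$, which follows directly from writing $M(\lambda_0)=1+\lambda_0\mu_1+\lambda_0^2\kappa_0$ and plugging into the definition of $\kappa_1$. Substituting this into $Q$ and $R$ and collecting powers of $\lambda$ should yield the clean factorization
\[
R(\lambda)-Q(\lambda)=p^*\kappa_0\,\lambda^2(\lambda_0-\lambda)(1+\theta_0\lambda),
\]
which is manifestly nonnegative on $[0,\lambda_0]$, finishing this step.

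For the second inequality, set $a:=p^*(\theta_0+\kappa_1)$ and $b:=p^*\theta_0\kappa_1$, so that $R(\lambda)=-\epsilon\lambda+a\lambda^2+b\lambda^3$. The unique positive root of $R'=0$ is, after rationalizing, $\lambda^*=\epsilon/(a+\sqrt{a^2+3b\epsilon})$. First I would verify $\lambda_R=\lambda^*$ by checking $\lambda^*\le\lambda_0$: since $\theta_0=(e^{\lambda_0}-1-\lambda_0)/\lambda_0^2\ge 1/2$ and, by the elementary inequality $e^x\ge 1+x$, $\kappa_1\ge\mu_1$, one has $a\ge p^*(1+2\mu_1)/2$, hence $a+\sqrt{a^2+3b\epsilon}\ge 2a\ge p^*(1+2\mu_1)=\epsilon/\lambda_0$. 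Next, using the first-order condition $2a\lambda^*+3b(\lambda^*)^2=\epsilon$ to eliminate $a\lambda^*$ from $R(\lambda^*)$, a short calculation gives $R(\lambda^*)=-\tfrac12\lambda^*(\epsilon+b(\lambda^*)^2)\le-\tfrac12\lambda^*\epsilon$, and substituting the explicit expression for $\lambda^*$ produces the stated bound.

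The remaining two inequalities are elementary. The third reduces to $\sqrt{a^2+3b\epsilon}\ge a$, which forces $2(a+\sqrt{a^2+3b\epsilon})\le 4\sqrt{a^2+3b\epsilon}$; inverting and negating gives the claim. The fourth is an application of AM--GM in the form $\theta_0\kappa_1\le(\theta_0+\kappa_1)^2/4$, yielding $3b\epsilon\le (3\epsilon/(4p^*))\,a^2$ and therefore $a^2+3b\epsilon\le a^2[1+3\epsilon/(4p^*)]$; taking square roots and inverting closes the chain.

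The main obstacle, as I see it, is spotting and verifying the factorization of $R-Q$ in the first step: without this algebraic windfall one would be forced into a much less transparent comparison between two cubic optimization problems over $[0,\lambda_0]$. Once the identity $\kappa_1=\mu_1+\lambda_0\kappa_0$ is in hand the factorization is routine, and the remainder of the argument is a single cubic extremization together with Jensen and AM--GM.
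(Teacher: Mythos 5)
Your proposal is correct and tracks the paper's own proof closely: the paper also establishes the pointwise bound $Q(\lambda)\le R(\lambda)$ on $[0,\lambda_0]$ (via Lemma~\ref{cubic} and Lemma~\ref{quartic}(a)), then invokes Lemma~\ref{cubgen}(a)--(b) for the cubic minimization, the trivial bound $\sqrt{a^2+3b\epsilon}\ge a$, and the AM--GM inequality $\theta_0\kappa_1\le(\theta_0+\kappa_1)^2/4$, with the same verification that $\lambda_R\le\lambda_0$ from $\theta_0\ge1/2$ and $\kappa_1\ge\mu_1$. Your explicit factorization $R(\lambda)-Q(\lambda)=p^*\kappa_0\,\lambda^2(\lambda_0-\lambda)(1+\theta_0\lambda)$, built on the identity $\kappa_1=\mu_1+\lambda_0\kappa_0$, is a welcome clarification of a step the paper leaves implicit (the pointwise comparison of the quadratic and linear interpolants of $M$ on $[0,\lambda_0]$), and your re-derivation of $R(\lambda^*)\le-\tfrac12\lambda^*\epsilon$ is exactly the content of Lemma~\ref{cubgen}(b).
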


 \begin{remark}
 From the last inequality we still identify two regimes of deviations.
 For mild ones ($\epsilon \ll p^*$) the upper boud is of order  $\epsilon^2 /2p^*(\theta_0+\kappa_1) $, that is a sub-Gaussian tail.
 For very large ones  ($\epsilon \gg p^*$) one obtain the bound 
 $ \epsilon^2/{ 2 (\theta_0+\kappa_1)  \sqrt{  p^*\epsilon } }$.
 \end{remark} 

 \begin{remark}
 The explicit expression for  $\lambda_R$ is derived in Lemma \ref{cubgen} and is given by
 $\lambda_R=   \tfrac{\epsilon}{\tilde{b}+b}$, with
 $b= p^* ( \theta_0 + \kappa_1) $ and $\tilde{b}= \sqrt{ [p^* ( \theta_0 + \kappa_1)]^2+3 p^* \theta_0 \kappa_1\epsilon}$.
 Long direct calculations give that
$R(\lambda_R)$ is
 $$
 R(\lambda_R)=
 \frac{  2 b^3  +  9ab\epsilon - 2 \tilde{b}^3    }{  (3a)^3}
 ,$$
 where $a= p^*  \theta_0  \kappa_1$.
 \end{remark}

 \begin{remark}
A main constant  in the last upper bound is given by   $\theta_0+\kappa_1$. 
To understand  its magnitude  one can use  Taylor's expansion, already mentioned right after the definitions 
of $\theta_0$ and $\kappa_1$.
While  $\theta_0=1/2+O(\lambda_0)$, $\kappa_1=\mu_1+O(\lambda_0)$. Thus the magnitude is given by
$1/2 + \mu_1 + O(\lambda_0)$.

 \end{remark}

\subsection{Examples}

\begin{itemize}
\item[(a)] \emph{Markov chain.}
A Markov chain is already a rich source of examples illustrating different situations.
We present one which is a slight perturbation of a product measure.
Informally, over an alphabet $\mathcal{A}$ we put only one symbol for which, once the chain reaches it,
the chain tends to stay in that state. For the remaining ones, the variables are independent.

Formally, let without loss of generality $\mathcal{A}=\{1,\dots,\ell\}$
and let  $(q_i)_{i\in {\mathcal A}}$ be a probability measure  over  $\mathcal{A}$. 
Define the following Markov chain over ${\mathcal A}$ through the following transition probabilities.
For $i\not=\ell$, set $\Q(i,j)=q_j$ for all $j\in \mathcal{A}$.  
Set $\Q(\ell,\ell)=\rho$ and $Q(\ell,j)= q_j \ (1-\rho)/(1-q_\ell)$, for all $j\not=\ell $.
Kac's Lemma gives that the stationary measure of  state $\ell$ is given by 
$p:=[\rho+ (1-\rho)(1+1/q_\ell) ]^{-1}  =  q_\ell/[ 1-\rho +q_\ell ] $.
We consider the occurrences of state $\ell$ in that Markov chain, which  tends to appear in clusters by construction. 
Specifically,   $\E(I_t I_{t+1})-\E(I_t)\E(I_{t+1}) = p(\rho-p)$, which obviously is positive for $\rho>p$. 
We also choose that the cluster of occurrences of $\ell$, ends as soon as the chain exits $\ell$, thus $q=1$.
Direct computations using the markovian property give
$\Psi(1) = \max\{ q_\ell/p , (1-q_\ell)/(1-p) \} = \max\{1-\rho+q_\ell  ,  (1-q_\ell)(1-\rho+q_\ell)/(1-\rho)  \}  $.
The residual size of the cluster is geometric with success probability  $1-\rho$ and thus 
$\mu_1= \rho/(1-\rho)$.
The moment generating function is
\[ \E(e^{\lambda N_\tau}|I_0=1 ) =  \frac{1-\rho}{1-\rho e^{\lambda}}  \ ,
\]
which has a singularity at $\lambda_s = -\ln \rho$. 
Thus, Chernoff inequality can be applied for 
$\lambda < \lambda_s$. 
That means we consider $\lambda_0 < \lambda_s,$ explicitly
$$ \frac{\epsilon }{p^*}\frac{1-\rho}{1+\rho} <     -\ln \rho .
$$
A direct computation gives that  Theorem \ref{teoQ} and Theorem \ref{teoR} apply, at least,  for   $\epsilon \le 2p^*$ (and $p+\epsilon\le1$).  
This condition gives that the square root in the upper bound (\ref{apply}) is bounded from above by  $\sqrt{5/2}$, and obviously from below by 1. 
Furthermore, 
\[
\kappa_1 = \frac{\rho (e^{\lambda_0}-1)   }{ (1-\rho e^{\lambda_0}) \lambda_0} 
=  \frac{\rho  }{ 1-\rho e^{\lambda_0}}        ( {\theta_0 \lambda_0 +1})  \ .
\]
We finally obtain 
$\theta_0+\kappa_1=  1/2 + \rho/(1-\rho e^{\lambda_0}) + O(\lambda_0)$.  
The cluster enlarges as $\rho$ goes to 1, which in turns makes $\kappa_1$, dominant and   diverging.
This makes the exponential decay of the deviation  slower. This puts in evidence the bigger fluctuations, more difficult to detect.


\item[(b)] \emph{Random walk.}  A particular case of the previous example is a symmetric random walk over a complete graph of $\ell$ vertexes and with a  loop only at vertex $\ell$. 
In this case $q_j=1/\ell$, for all $j<\ell.$
We choose $\rho=(\ell-1)/\ell$, and the probability for the walk to go from vertex $\ell$ to any other vertex equals $1/\ell(\ell-1)$.  
For large $\ell$, the walk takes a long time to reach $\ell$, but once it is in, it remains there for a long time.  One gets  $p=1/2,  \ \mu_1=\ell-1,  \  1/(1+2\mu_1)=1/(2\ell-1),$ and $\Psi(1)=2(\ell-1)/\ell.$
This renders 
$$ \lambda_0 = \frac{\epsilon \ell }{  (\ell-1)(2\ell-1)  }  \approx \frac{\epsilon}{2\ell} \ .
$$
Finally, 
$\theta_0+\kappa_1=   1/2 +  \tfrac{\ell-1}{\ell  (1- (\ell-1)e^{\lambda_0}/\ell)}     + O(\epsilon/\ell )$.
For large $\ell$. one has   $\kappa_1\approx \ell$.

\item[(c)] \emph{Markov chain with stretching cluster.}
We now slightly modify the previous example to  present one with $q=2$. It can be further easily generalised to any $q$.
Informally, we add a state linked only to state $\ell$.
Precisely, let  $\mathcal{A}=\{1,\dots,\ell, \ell+1\}$.
Yet, we keep the same   probability measure  $(q_i)_{i\in \{1,\dots,\ell\}} $
and  the transition probabilities:
for $i=1,\dots, \ell-1$, set $\Q(i,j)=q_j$ for all $j\le \ell$. 
Further, set $\Q(\ell,\ell)=\rho_0$ and $\Q(\ell,\ell+1)=\rho_1$ with $\rho_0+\rho_1=\rho$.
As before,   $Q(\ell,j)= q_j \ (1-\rho)/(1-q_\ell)$ for all $j\le \ell$.
To complete the transition probabilities we set $Q(\ell+1,\ell)=1$.
We still consider the occurrences of state $\ell$ in this Markov chain.
Now,  the stationary measure of  $\ell$ is   
$p:=[\rho_0+2 \rho_1+  (1-\rho)(1+1/q_\ell) ]^{-1}  =  q_\ell/ [ 1- \rho + q_\ell + \rho_1 q_\ell ] $.

We choose that a cluster of occurrences of $\ell$   ends as soon as the chain runs  two consecutive times over states different from $\ell$,  namely
$\tau=\inf \{ k \ | \  X_{k+1}\not=\ell, X_{k+2}\not=\ell  \}.$  This means $q=2$.
The coefficient $\Psi(2)$ has the same expression that in the previous example, with the corresponding actual value of $p$.
The residual size of the cluster is geometric, now with success probability  
$1-[\rho+ (1-\rho)q_\ell]=(1-\rho)(1-q_\ell)$.  
This gives 
$$
\mu_1=  \frac { 1-  (1-\rho)(1-q_\ell) }{    (1-\rho)(1-q_\ell)  }, 
$$
and  
$$
\frac{1}{ 1+2\mu_1}    =        \frac{    (1-\rho)(1-q_\ell)  }{ 2-  (1-\rho)(1-q_\ell) } .
$$
Recall that $\lambda_0=\epsilon/p^*(1+2\mu_1)$.
Similarly to the first example 
$\theta_0+\kappa_1=  1/2 + \mu_1+ O(\lambda_0)$. The second term is larger than the first one for 
$(1-\rho)(1-q_\ell)<1/3.$

Furthermore, in this case, the associated polynomial is $z^2-z-d$, which has the roots
$z_0= (1+\sqrt{1+4d})/2,$ and $z_1= (1-\sqrt{1+4d})/2$. 
Recall that  $d=p^*(e^{\lambda_0}-1)M(\lambda_0)$.
The constants $C_0, C_1$ are the solution of
\[
\begin{bmatrix}
    1       &   1     \\
    z_{0}  & z_{1}  \\
\end{bmatrix} 
\begin{bmatrix}
    C_0     \\
    C_1  
\end{bmatrix} 
=
\begin{bmatrix}
    1     \\
    x_1  
\end{bmatrix}  ,
\]
where $x_1=1+p(e^{\lambda_0}-1)$. It has  the solution 
$$
C_0= \frac{x_1-z_0}{z_1-z_0} \ , \qquad C_1= \frac{x_1-z_0}{z_0-z_1} .
$$

\item[(d)]  \emph{Generalized Smith model.}
A  regeneration model is a process constructed concatenating blocks of symbols generated independently and with the same distribution.
In a generalized regeneration model, the distribution of the next block is no longer independent, it  depends (only) on the last block.
The Smith model (see \cite{smith} )is a specific regeneration model, in which each new block is constructed with only one symbol, repeated a random number of times.
The generalized Smith model is a generalized regeneration process
 in which the condition is not to repeat the symbol of the previous block.
The specific construction is the following.
Firstly, there is a distribution $(q_j)_{j\ge 1},$ over the positive integers.
Now, given a positive integer $a$ observed in the previous block, the next symbol is chosen with this distribution \emph{ normalized to do not repeat $a$.} Namely, set $R_n$ as the set of realizations of the process with regeneration 
between  times $n$ and $n+1$.
Then $\P( X_{n+1}=a \ | \ X_n=a, R_n)=0, $ and $\P( X_{n+1}= b \ | \ X_n=a, R_n)=q_b/(1-q_a)$ for all $b\not=a$.
Once $b$ has being  chosen, the length of the block of $b$'s has one of two possible  values:
1 with  probability   $(b-1)/b$,   
or
$b+1$  with  probability   $1/b$.
Loosely speaking,   large values of $b$ usually appear  isolately, but eventually they appear in  a cluster of size $b+1$.
We fix a symbol $b$ and observe  a cluster of occurrences of $b$. We  set that the cluster ends as soon as another symbol different from $b$ appears in the sample. This means $q=1$.
It follows easily  that,  for any $b$, the mean size of each block of $b$'s is 2,
the conditional mean of the residual block size, given that one symbol  $b$ is observed, is 
$\mu_1=1/2,$ and the moment generating function of the residual cluster is
$$
M(\lambda)= \frac{b-1}{b} + \frac{e^{\lambda (b+1)}-1}{e^{\lambda}-1 } \frac{1}{b(b+1)} ,
$$
defined for all $\lambda>0$. This gives  $\lambda_0=\epsilon/2p^*$ and 
$$
\kappa_1= \frac{1}{b\lambda_0}  \left[    \frac{1}{b+1}. \frac{e^{\lambda_0(b+1)}-1}{e^{\lambda_0}-1 }  -1  \right]  
= \frac{1}{2} + O(\lambda_0) \ .
$$
This gives $\theta_0$ and $\kappa_1$ of the same order.


\end{itemize}

\section{Proofs}

\subsection{Moment generating function}

Before proving  our main result, we need a number of lemmas that  their final goal is to bound 
$M(\lambda)$, the moment generating function of the (residual) cluster size $N_1^\tau$.  
Recall that
$ \theta_0=  \frac{e^{\lambda_0} -1 -\lambda_0}{\lambda_0^2} $ and   
$\kappa_0=\frac{M(\lambda_0) -1 -\lambda_0 \mu_1 }{\lambda_0^2 } \ . $

\begin{lema} \label{quartic}
Fix $\lambda_0>0$.
Then, for all $ \lambda \in [0, \lambda_0]$ the following inequalities hold
\begin{itemize}
\item[(a)] $e^{\lambda}-1 \le \lambda +  \theta_0 \lambda^2 \ ,  $
\item[(b)]  $  M(\lambda)  \le  1 + \mu_1 \lambda  + \kappa_0  \lambda^2  \ .   $
\end{itemize}
\end{lema}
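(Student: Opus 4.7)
The plan is to deduce both inequalities from one elementary monotonicity fact: for every non-negative real $k$, the function
\[
\lambda \mapsto \frac{e^{\lambda k}-1-\lambda k}{\lambda^2}
\]
is non-decreasing on $(0,\infty)$. This is immediate from the Taylor expansion
\[
\frac{e^{\lambda k}-1-\lambda k}{\lambda^2}=\sum_{j\ge 2}\frac{k^{j}}{j!}\,\lambda^{j-2},
\]
whose coefficients in $\lambda$ are all non-negative (understanding the value at $0$ as $k^2/2$).

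For (a) I simply apply this with $k=1$: since $(e^\lambda-1-\lambda)/\lambda^2$ is non-decreasing on $(0,\lambda_0]$, it is bounded above by its value at $\lambda_0$, which is exactly $\theta_0$ by definition. Multiplying through by $\lambda^2$ and adding $\lambda$ yields the stated bound.

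For (b) I use the same principle inside an expectation. Writing
\[
\frac{M(\lambda)-1-\lambda\mu_1}{\lambda^2}=\E\!\left(\frac{e^{\lambda N_1^\tau}-1-\lambda N_1^\tau}{\lambda^2}\,\Big|\,I_0=1\right),
\]
the integrand is non-negative (since $e^x-1-x\ge 0$) and, by the monotonicity fact applied pointwise with $k=N_1^\tau(\omega)$, is non-decreasing in $\lambda$. Hence the expectation is non-decreasing in $\lambda$ on $(0,\lambda_0]$, so it is bounded above by its value at $\lambda_0$, which is $\kappa_0$. Rearranging gives the claim.

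There is essentially no obstacle: one only needs to take care with the exchange of expectation and limit/monotonicity, which is justified by non-negativity of the integrand (monotone convergence / dominated convergence against $(e^{\lambda_0 N_1^\tau}-1-\lambda_0 N_1^\tau)/\lambda_0^2$, whose expectation equals the finite quantity $\kappa_0$ under our standing hypothesis that $M(\lambda_0)<\infty$). The boundary case $\lambda=0$ is handled by continuous extension, where both sides of (a) and (b) equal, respectively, $0$ and $1$.
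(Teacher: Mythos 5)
Your argument is correct and follows essentially the same route as the paper: both reduce (a) and (b) to the pointwise inequality $e^{\lambda k}-1-\lambda k \le \frac{e^{\lambda_0 k}-1-\lambda_0 k}{\lambda_0^2}\,\lambda^2$ for $k\ge 0$ and $\lambda\in[0,\lambda_0]$, then take the conditional expectation over $k=N_1^\tau$; the paper justifies the pointwise step tersely "by convexity," while you make it explicit via the non-negative Taylor coefficients of $(e^{\lambda k}-1-\lambda k)/\lambda^2$, which is a clean and fully rigorous way of saying the same thing.
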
 


\begin{proof}
By convexity,  one gets 
\[  e^{\lambda}-1 \le \lambda + \frac{e^{\lambda_0} -1 -\lambda_0}{\lambda_0^2} \ \lambda^2 \  ,  \]
which is the first inequality by definition of $\theta_0$.
Similarly
\[  e^{\lambda  N_1^\tau}  \le  1 + \lambda  N_1^\tau + \frac{e^{\lambda_0  N_1^\tau}  -1 -\lambda_0  N_1^\tau }{\lambda_0^2 }  \lambda^2  \ .   \]
Taking expectation the second  inequality follows.
\end{proof}

\begin{remark}
Taylor's expansion gives immediately  inequalities in the opposite direction
\begin{itemize}
\item[(a)] $e^{\lambda}-1 \ge \lambda +  \frac{1}{2} \lambda^2 \ ,  $
\item[(b)]  $  M(\lambda)  \ge  1 + \mu_1 \lambda    \ .   $
\end{itemize}
It folluws that no better constants are expected to be obtained for the polynomials $Q$ and $R$ in  Theorem \ref{teoQ} and Theorem \ref{teoR} respectively.
\end{remark}

The previous lemma are used below  to provide a quartic polynomial that bounds  $\E(e^{\lambda N_1^\tau}|I_0=1)$. 
Less accurate but more transparent results  come from  cubic bounds, that  rise up from the following lemma.
 Recall that $\kappa_{1} = \E(  \frac{e^{\lambda_0 N_1^\tau}-1}{\lambda_0} | I_0=1) .$

\begin{lema} \label{cubic}
Fix $\lambda_0>0$. For all $\lambda \in [0, \lambda_0]$ the following inequality holds

$M(\lambda) \le  1+  \kappa_1\lambda  \ .$
  \end{lema}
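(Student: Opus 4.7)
The plan is to reduce the lemma to a pointwise chord inequality for the convex function $\lambda \mapsto e^{\lambda N}$, and then integrate.

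First I would fix an arbitrary nonnegative integer $N$ (the role of $N_1^\tau$, which is clearly $\ge 0$) and consider $g(\lambda) = e^{\lambda N}$ as a function of $\lambda$ on the interval $[0,\lambda_0]$. Since $g$ is convex in $\lambda$, for every $\lambda \in [0,\lambda_0]$ the value $g(\lambda)$ lies below the chord joining $(0, g(0))$ and $(\lambda_0, g(\lambda_0))$. Writing $\lambda = (1-\lambda/\lambda_0)\cdot 0 + (\lambda/\lambda_0)\cdot \lambda_0$ gives
\[
e^{\lambda N} \;\le\; \left(1-\frac{\lambda}{\lambda_0}\right) + \frac{\lambda}{\lambda_0}\,e^{\lambda_0 N} \;=\; 1 + \lambda\cdot\frac{e^{\lambda_0 N} - 1}{\lambda_0}.
\]

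Next I would substitute $N = N_1^\tau$ (which takes values in $\Nset$) and take conditional expectation given $\{I_0 = 1\}$. By linearity and monotonicity of the conditional expectation,
\[
M(\lambda) \;=\; \E\!\left(e^{\lambda N_1^\tau}\,\big|\,I_0=1\right) \;\le\; 1 + \lambda \cdot \E\!\left(\frac{e^{\lambda_0 N_1^\tau} - 1}{\lambda_0}\,\big|\,I_0 = 1\right) \;=\; 1 + \kappa_1 \lambda,
\]
which is precisely the claim.

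There is essentially no obstacle here: the argument is just the one-step chord bound for a convex function, applied under the expectation. The only thing to check is that $\kappa_1$ is finite so that the inequality is not vacuous, but this is already guaranteed in the regime where Theorem \ref{teoQ} is invoked (one requires $\lambda_0$ to lie below the singularity of $M$, so $\E(e^{\lambda_0 N_1^\tau}\mid I_0=1) < \infty$ and hence $\kappa_1 < \infty$). Compared to Lemma \ref{quartic}(b), which yields a quadratic bound in $\lambda$ by the same chord trick applied to the map $\lambda \mapsto (e^{\lambda N}-1-\lambda N)/\lambda^2$ on $[0,\lambda_0]$, the present linear interpolation absorbs both the $\mu_1 \lambda$ and the $\kappa_0 \lambda^2$ contributions into a single coefficient $\kappa_1$, at the price of a looser (but simpler) bound, consistent with the remark $\kappa_1 \approx \mu_1 + \lambda_0\mu_2/2$.
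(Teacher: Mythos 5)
Your proof is correct and takes essentially the same approach as the paper: your chord bound $e^{\lambda N}\le 1+\lambda\,(e^{\lambda_0 N}-1)/\lambda_0$ is exactly the statement that the secant slope $(e^{\lambda N}-1)/\lambda$ is nondecreasing in $\lambda$, which is the (unstated) reason behind the paper's one-line inequality before taking expectation.
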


\begin{proof} 
A  bit of algebra gives
\begin{eqnarray}
M(\lambda)
  &=& 1+  \lambda  \E(  \frac{e^{\lambda N_1^\tau }-1}{\lambda} | I_0=1)  \\
  &\le& 1+  \lambda  \E(  \frac{e^{\lambda_0 N_1^\tau}-1}{\lambda_0} | I_0=1)  . \\
  \end{eqnarray}
  
This ends the proof.
\end{proof}

 \subsection{Large deviations}

We are now ready to prove our main results.

\begin{proof}[Proof of Theorem \ref{teoQ}]
Chernoff bound gives
\begin{align*}
 \P( N_n \ge  \alpha n )  & \le \inf_{\lambda>0}  \ e^{-\lambda\alpha n}  \E(e^{\lambda N_n} ) . 
\end{align*}

 %
 %
 %
 
 
 Conditioning on $I_0$ one gets
\begin{eqnarray}
 \E(e^{\lambda N_n} ) 
 &=& pe^{\lambda}\E(e^{\lambda N_1^{n-1}} | I_0=1) + (1-p)\E(e^{\lambda N_1^{n-1}} | I_0=0)  \nonumber  \\
 &=&     \E(e^{\lambda N_1^{n-1}} ) + p(e^{\lambda}-1)\E(e^{\lambda N_1^{n-1}} | I_0=1)  .   \label{xn}\\
 \end{eqnarray}
 
 To have $I_0=1$ means that a cluster  already began  at the beginning of the sample.  
So, to compute the  expectation in the last display, we write $N_1^{n-1}$ as the sum of the remaining occurrences within this first cluster (namely the residual time of the cluster) and the sum of the  occurrences  of $A$ after finished this first cluster. That is
\[
N_1^{n-1} =  N_1^{\tau} +  N_{\tau+q+1}^{n-1} .
\]
By   hypothesis 
\begin{eqnarray*}
\E(e^{\lambda N_1^{n-1}} | I_0=1 ) 
&=&   \E \left(  e^{\lambda ( N_1^{\tau} +  N_{\tau+q+1}^{n-1})}  \  | \   I_0=1    \right) \\
&\le&  \E(e^{\lambda N_1^{\tau} }|I_0=1 ) \Psi(q) \E( e^{\lambda N_{\tau+q+1}^{n-1}} ) \\
&=& M(\lambda) \Psi(q) \E( e^{\lambda N_{\tau+q+1}^{n-1}} ) . 
\end{eqnarray*}
Since  $N_j^{n-1}$ is monotonic on $j$, one has the trivial bound   $N^{n-1}_{\tau+q+1} \le N_{q+1}^{n-1}$. 
Therefore, still using stationarity, the last factor is bounded from above by
$ \E( e^{\lambda N_0^{n-q}} ). $
To simplify notation,  set  $x_n= \E(e^{\lambda N_ {n}})$. Recall also that $p^*=\Psi(q) p$.
One gets the recursive inequality
\begin{eqnarray*}
x_n 
&\le&   x_{n-1}  +  p^* (e^{\lambda}-1)M(\lambda) x_{n-q}   . \\
\end{eqnarray*}
Its associated (complex) polynomial  is
\[
a_p(z)=z^q  -  z^{q-1}  -    F(\lambda) \ ,  
\]
with  $F(\lambda)=p^* (e^{\lambda}-1)M(\lambda)$.
This polynomial has  $q$ different roots. The method of recursive equations gives that
 \begin{equation} \label{geral}
 x_n \le  \sum_{i=0}^{q-1}  C_i z^n_i  \ ,
 \end{equation}
 where $C_i, i=0,\dots,q-1$ are given by the initial conditions  $x_0,,...,x_{q-1}$ and  $z_0,...,z_{q-1}$ are the roots of $a_p$.
 The rest of the proof consists  in two problems: to control the roots and to control the constants. \\
 
 {\sl The roots.} 
 To control the roots one may use calculus and Rouch\'e's Theorem. 
 Notice that $a_p(z)=x^{q-1}(x-1)-F(\lambda)$.
The roots of $a_p$   are thus continuous perturbations of the roots of $z^q  -  z^{q-1} = z^{q-1}(z-1)$. These are: 1,  a single root and 0,  a root with multiplicity $q-1$.
Exactly one root, say   $z_0$ (the perturbation of the root 1 of the homogeneous part of $a_p$), is real and positive.  Further, Rouch\'e's Theorem gives directly that the moduli of \emph{all} the roots are bounded from above by $1+F(\lambda)$.
 (Actually, the  moduli of $z_1,...,z_{q-1}$  are smaller than $|z_0|$ and require a more detailed inspection for a sharper bound. The bounds  for the remaining roots of a polynomial of the form $a_p$ are obtained independently in Lemma \ref{sharproots} in the appendix with further bounds in Lemma \ref{secondary} and Lemma \ref{lower}). 
 One concludes that the leading term for the upper bound in (\ref{geral})  is given by
 \begin{equation} \label{inf}
 \left[  \inf_{\lambda>0} e^{-\lambda \alpha}  (1+ F(\lambda)  ) \right]^ n    .
 \end{equation}
 


A  basic inequality  gives $1+F \le e^F$. 
It follows that the infimum in the brackets in   (\ref{inf}) is bounded by
\[
\inf_{ \lambda > 0}  e^{-\lambda \alpha + F(\lambda) } \ .
\]
Now one needs to minimize the exponent of the above display, that is $-\alpha \lambda + F(\lambda)$.
Firstly fix $\lambda_0>0$.
 It worth noticing that this function is convex, and at $\lambda=0$ equals zero and with derivative equal to $-\epsilon$. Thus it has a minimum at some positive $\lambda$. To bound this minimum, we fix $\lambda_0>0$.
 Lemma \ref{quartic} gives  that, for all $\lambda\in [0 , \lambda_0]$, 
 \[
 -\alpha \lambda + F(\lambda) \le
  -  \epsilon \lambda +   p^* \left[ ( \theta_0  + \mu_1)   \lambda^2 +  (\theta_0\mu_1+\kappa_0)   \lambda^3   +  \theta_0 \kappa_0  \lambda^4 \right]  
  =: Q(\lambda)  .
\]
Similarly, for $\lambda\ge 0$, 
one has   $Q'(0)<0$ and   $Q'$  increasing and diverging. So $Q$ has a unique  minimum 
$\lambda_{\rm Q}$. 
We need to check that  it is smaller or equal than $\lambda_0$.
Direct computations show that
$Q'( \epsilon/2p^*(\theta_0+\mu_1)  )$ is 
positive, so $ \epsilon/2p^*(\theta_0+\mu_1)$ is an upper bound for $\lambda_Q$. 
Still, $\theta_0 >1/2$. 
Finally
\[
\lambda_{\rm Q} <  
\frac{\epsilon}{2p^*(\frac{1}{2} +\mu_1)}  \ . 
\]



 Therefore, for the choice  $\lambda_0= \epsilon / p^*(1+2\mu_1)$, 
 one has the   guarantee that  $\lambda_{\rm Q} <\lambda_0$.  
This ends the  proof of inequality (\ref{main}). 

\vskip0.5cm

{\sl The constants.}
 To compute the constants $C_i, i=0,\dots, {q-1}$ one has to solve the linear system $VC=I$ where $C=(C_0,...C_{q-1})$ is the vector of constants, $I=(x_0,...x_{q-1})$ is the vector of initial conditions $x_i= \E(e^{\lambda N_ {i-1}}),$ and $V$ is the Vandermonde matrix generated by the roots $(z_0,...z_{q-1})$,
Namely 
\[ V=
\begin{bmatrix}
    1       &   1      &   1     & \dots & 1 \\
    z_{0}  & z_{1} & z_{2} & \dots & z_{q-1} \\
    \hdotsfor{5} \\
    z_{0}^{q-1}       & z_{1}^{q-1} & z_{2}^{q-1} & \dots & z_{q-1}^{q-1}
\end{bmatrix} \ .
\]
Consider the similar linear system $VC'=I'$ where $I'=(1,x_1,x_1^2,...,x_1^{q-1})$. 
Therefore $C=C'+ C''$, with $C'= V^{-1}I' $ and $C''= V^{-1}(I-I')$. 
We treat now both terms.
Cramer's rule gives that $C'_j=\det(V'_j)/\det(V)$ where $V'_j$ is the matrix $V$ changing the column $j$ for the vector $I'$. Since the components of $I'$ are powers of $x_1$,  one gets that $V'_j$
 is also a Vandermonde matrix, specifically, generated by $z_0,...,z_{j-1}, x_1, z_{j+1},...,z_{q-1}$. The classical expression for the determinant of a Vandermonde matrix (see, for instance, \cite{gautschi}) gives
 \[
 C'_j = \prod_{i\not=j} \frac{x_1 - z_i }{z_j - z_i } \ .
 \]
 Notice firstly that $x_1>1,$ and that $a_p$ has $z_0$ as  unique  positive root, thus $x_1$
is different from $z_i, \ i=1,...,q-1$. 
Further  $x_1= 1+p(e^{\lambda}-1) < p^*(e^{\lambda}-1)M(\lambda)$,
 thus strictly different from $z_0$.  Therefore the  numerator can be rewritten as
 \[
 \frac{a_p(x_1)}{x_1 - z_j}  \ .
 \]
For the denominator one proceeds as in the numerator, but since $z_j$ is a root, it can be computed taking limit
\[
\lim_{z\to z_j} \frac{a_p(z)}{z-z_j} = {a_p'(z_j)} \ .
\]
One concludes that
\begin{equation} \label{const1=}
C'_j
=  \frac{a_p(x_1)}{ (x_1-z_j) \ a_p'(z_j)}  
=  \frac{ a'_p(\zeta)(x_1-z_0)  }{   a_p'(z_j)  \ (x_1-z_j) }  \ ,
 \end{equation}
 for some $\zeta \in  (x_1, z_0)$. 
 Heuristically, $x_1$ is close to the root $z_0$, resulting in $C'_0$ close to one and the remaining constants  close to zero.
 Formally,  
 $C'_0 = a'_p(\zeta)/a'_p(z_0)$. 
Further,  $a'_p(x)>0$ at least for $x\ge 1$, it follows that  
\begin{equation} \label{const1bound0}
\frac{a'_p(x_1) }{a'_p(z_0)}  < C'_0 < 1.
\end{equation}
Consider  now   $C'_j, j=1,\dots,q-1$.  Notice that this is for $q\ge 2$. Otherwise this set of  constants is empty.
In this case, one gets the bounds
\begin{equation} \label{const1boundj}
  \frac{a'_p(x_1) }{a_p'(z_j)  \ (x_1-z_j) }  \ (z_0-x_1)  \le
 |C'_j| \le    \frac{a'_p(z_0) }{a_p'(z_j)  \ (x_1-z_j) }  \ (z_0-x_1)  \ . 
\end{equation}
Since $x_1$ is close to $z_0$, the modulus of the constants are close to zero, depending on the spectral gap of the roots of $a_p$.

Now consider the second term $C''$. 
Set $\delta_j := |x_j-x_1^j|  .$  Notice that $\delta_0=\delta_1=0$.
As before, $C_j''=  \det(V_j(\delta))/\det(V)$, where $V_j(\delta)$ is the matrix $V$ replacing the $j$-column by the $\delta_j$'s.
Thus
\begin{equation} \label{const2=}
C_j''=  \frac{\det(V_j) }{a'_p(z_j)}  ,
\end{equation}
and
\begin{equation} \label{const2bound}
|C_j''|= \max_j \delta_j \   ||V^{-1}||  ,
\end{equation}
The first factor must be bounded by \emph{ad hoc} properties of the process and the second one with Corollary \ref{VandermondeCor}

\end{proof}

\vskip1cm

\begin{proof}[Proof of Theorem \ref{teoR}]
  By an application of  Lemma \ref{cubic}   and the first inequality in Lemma \ref{quartic}
one gets  
\begin{eqnarray*}
 Q(\lambda)  \le  -\epsilon \lambda +  p^*\left[ (\theta_0+\kappa_1) \lambda^2 +{\theta_0 \kappa_1 }  \lambda^3  \right] , 
 \end{eqnarray*}
 for all $\lambda\in [0,\lambda_0]$ and any fixed $\lambda_0$.
 First inequality follows. The second one is a consequence of an application of Lemma \ref{cubgen}(b) in the appendix.
Further, basic calculus gives $\theta_0 \kappa_1 \le   (\theta_0+\kappa_1)^2/4. $
Third inequality now is immediate. 
Still, by Lemma \ref{cubgen} (a), the minimum of $R(\lambda)$ is reached at a point bounded from above by 
$\epsilon/ 2p^*(\theta_0+\kappa_1)$ which in turn, by the lower bounds of $\theta_0$ and $\kappa_1$, is bounded from above by 
$\epsilon/ p^*(1+2\mu_1)=\lambda_0$.
  This  ends   the proof.
\end{proof}

\section{Appendix}

\subsection{Roots of $z^q-z^{q-1}-d$}

In this section we provide information for the roots of a polynomial of the form $z^q-z^{q-1}-d$, with $d>0$.  We begin  rewriting it  as 
$z^{q-1}(z-1)-d$. This is a translation of $z^{q-1}(z-1)$ which has a single root at 1 and a  root at 0, of multiplicity $q-1$.
So the roots of the first polynomial are continuous perturbation of these ones. 
We call $z_0$ the root of  $z^q-z^{q-1}-d$ corresponding to  the perturbation of the root 1 of $z^q-z^{q-1}$. 
This root is real. Direct computations give the bound  $ z_0< 1+d$.
Heuristically, the remaining roots are close to the  $q-1$-th roots o of $d$.  We formalise this information in the next lemmas.

\begin{lema} \label{realproot} Consider the polynomial $a_p(z)= z^q-z^{q-1}-d$ with a constant $d>0$. 
There is a real root  that verifies 
\[
1+ \frac{ d }{ (1+d)^{q-1}} < z_0 <1+  d \ .
\]
\end{lema}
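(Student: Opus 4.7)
The plan is to locate the root by direct evaluation of $a_p$ at the two candidate endpoints and then invoke the Intermediate Value Theorem. Rewriting $a_p(z)=z^{q-1}(z-1)-d$ makes the computations clean.

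First I would establish the upper bound. Evaluating at the right endpoint,
\[
a_p(1+d) \;=\; (1+d)^{q-1}\bigl((1+d)-1\bigr)-d \;=\; d\bigl[(1+d)^{q-1}-1\bigr],
\]
which is strictly positive whenever $q\ge 2$ and $d>0$. Since $a_p(1)=-d<0$, the IVT already gives a real root in $(1,1+d)$.

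Next I would refine the lower endpoint. Set $y := 1+\dfrac{d}{(1+d)^{q-1}}$; then $y-1=d/(1+d)^{q-1}>0$ and $y<1+d$ (strictly, since $(1+d)^{q-1}>1$). Hence $y^{q-1}<(1+d)^{q-1}$, and
\[
a_p(y) \;=\; y^{q-1}(y-1)-d \;<\; (1+d)^{q-1}\cdot\frac{d}{(1+d)^{q-1}}-d \;=\; 0 .
\]
Combined with $a_p(1+d)>0$, the IVT yields a root $z_0\in(y,\,1+d)$, which is exactly the desired two-sided bound.

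Finally, to justify calling this root $z_0$ unambiguously, I would check uniqueness on $(1,\infty)$ via $a_p'(z)=z^{q-2}(qz-(q-1))$, which is strictly positive for $z>(q-1)/q$; thus $a_p$ is strictly increasing on $(1,\infty)$ and the root found is the unique real root larger than $1$. The only subtlety worth flagging is the strict inequality in the lower bound, which relies on $y<1+d$ and therefore on $q\ge 2$; this is the regime in which the lemma is non-trivial (for $q=1$ the polynomial is linear with the single root $1+d$, and both bounds collapse). Apart from this small case distinction, the argument is entirely elementary and involves no real obstacle.
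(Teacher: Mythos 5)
Your proof is correct and reaches the lemma's conclusion cleanly, but it establishes the sign of $a_p$ at the two endpoints by a different technical means than the paper. The paper argues via convexity: starting from $a_p(1)=-d$ and $a_p'(1)=1$, it observes that $a_p$ lies above its tangent line at $x=1$ (so $a_p(1+d)>0$) and below the secant through $(1,-d)$ and $(1+d,a_p(1+d))$ (so $a_p\bigl(1+\tfrac{d}{(1+d)^{q-1}}\bigr)<0$), then concludes by the intermediate value theorem. You instead compute $a_p(1+d)=d\bigl[(1+d)^{q-1}-1\bigr]>0$ directly, and bound $a_p(y)$ at $y=1+\tfrac{d}{(1+d)^{q-1}}$ by noting $y<1+d$ and using the monotonicity of $t\mapsto t^{q-1}$, avoiding any appeal to convexity. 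Both arguments are elementary and essentially of the same length; yours is arguably a bit more transparent since the positivity of $a_p(1+d)$ is exhibited by an explicit factorization rather than inferred geometrically. You also add two things the paper leaves implicit: a uniqueness argument via $a_p'(z)=z^{q-2}\bigl(qz-(q-1)\bigr)>0$ for $z>1$, which justifies singling out $z_0$, and the observation that the strict two-sided inequality requires $q\ge 2$ (for $q=1$ both bounds collapse to $1+d$). Neither addition is needed for the lemma as stated, but both are worthwhile clarifications.
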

\begin{proof}
One has $a_p(1)=-d$  and  $a_p(x) $ is increasing and convex  for (not only)    $x \ge 1$. Further $a'_p(1)=1$. The tangent line of $a_p$ at $x=1$ provides that $a_p(1+d)>0$. The secant between $(1, a_p(1))$ and $(1+d, a_p(1+d))$ gives that $a_p(1+ d/(1+d)^{q-1} )<0 $.
\end{proof}

The lemma below gives  bounds for the second root $z_1$ of the polynomial  $z^q-z^{q-1}-d$.
For second root we mean the (strictly) complex root with smallest positive angle.

\begin{lema} \label{sharproots} Consider the polynomial $a_p(z)= z^q-z^{q-1}-d$ with a constant $d>0$. 
There is a  root $z_1=r_1 e^{i\beta_1}$  with  $\pi/(q-1)<\beta_1 \le 2\pi/q$ and 
$$r_1 \le f^{-1}(d) \le  \left( \frac{d}{\sin(\pi/(q-1))}    \right)^{1/(q-1)} \ , $$ 
where $f(r)=r^{q-1}\sqrt{  1+ r^2  -2 r \cos(\pi/(q-1)) }$.
Yet, there is no root with positive angle less than $\pi/(q-1)$.
\end{lema}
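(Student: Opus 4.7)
The plan is to rewrite the equation $a_p(z)=0$ as $z^{q-1}(z-1)=d$ and, since $d>0$ is real, to split it into a modulus and an argument condition. Writing $z=re^{i\beta}$ with $r>0$, these become $r^{q-1}\sqrt{1+r^2-2r\cos\beta}=d$ and $(q-1)\beta+\arg(re^{i\beta}-1)\equiv 0\pmod{2\pi}$.

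For the no-root-in-$(0,\pi/(q-1))$ assertion, observe that for $\beta\in(0,\pi)$ the point $z$ lies in the open upper half-plane, so $z-1$ also does, giving $\arg(z-1)\in(0,\pi)$. Then $(q-1)\beta+\arg(z-1)$ is strictly between $0$ and $(q-1)\beta+\pi$; if furthermore $\beta<\pi/(q-1)$, this sum is strictly between $0$ and $2\pi$ and so cannot equal any nonzero multiple of $2\pi$, contradicting the argument equation.

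For the existence and location of $z_1$ I would use a parametric/continuity argument. For each fixed $\beta\in(0,\pi)$ the map $r\mapsto\arg(re^{i\beta}-1)$ is continuous and strictly decreases from $\pi$ (at $r=0$) to $\beta$ (as $r\to\infty$), a clean geometric fact about the line $\{-1+re^{i\beta}:r>0\}$ in the upper half-plane. Hence the argument equation with $k=1$ has a unique solution $r(\beta)>0$ exactly when $2\pi-(q-1)\beta\in[\beta,\pi]$, i.e.\ $\beta\in[\pi/(q-1),2\pi/q]$, and $r(\beta)$ is continuous there with $r(\beta)\to 0$ as $\beta\downarrow\pi/(q-1)$ and $r(\beta)\to\infty$ as $\beta\uparrow 2\pi/q$. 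The continuous map $\tilde d(\beta):=r(\beta)^{q-1}\sqrt{1+r(\beta)^2-2r(\beta)\cos\beta}$ therefore sweeps over all of $(0,\infty)$, so by the intermediate value theorem there is some $\beta^\ast\in(\pi/(q-1),2\pi/q)$ with $\tilde d(\beta^\ast)=d$, yielding a root of $a_p$ in the desired wedge. Combined with the previous paragraph this gives $\beta_1\in(\pi/(q-1),2\pi/q]$.

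The modulus bound is then a straightforward comparison: because $\beta_1>\pi/(q-1)$ and cosine decreases on $(0,\pi)$, the modulus equation gives
\[
d=r_1^{q-1}\sqrt{1+r_1^2-2r_1\cos\beta_1}>r_1^{q-1}\sqrt{1+r_1^2-2r_1\cos(\pi/(q-1))}=f(r_1),
\]
so $r_1\le f^{-1}(d)$ (with $f$ strictly increasing on $(0,\infty)$, as a short calculation on $(f^2)'$ shows via a quadratic $qr^2-(2q-1)r\cos(\pi/(q-1))+(q-1)$ whose discriminant is negative for $q\ge 3$). For the last inequality, the identity $1+r^2-2r\cos\theta=(r-\cos\theta)^2+\sin^2\theta\ge\sin^2\theta$ gives $f(r)\ge r^{q-1}\sin(\pi/(q-1))$, and substituting $r=f^{-1}(d)$ produces $f^{-1}(d)\le(d/\sin(\pi/(q-1)))^{1/(q-1)}$. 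The main obstacle is the existence step: one must carefully track the monotonicity of $\arg(re^{i\beta}-1)$ in $r$, identify the correct interval for $\beta$, and verify the boundary behaviour of $r(\beta)$ at both endpoints so that the IVT on $\tilde d$ applies cleanly. Everything else is routine trigonometry and a calculus check.
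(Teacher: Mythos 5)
Your proposal is correct and follows the same polar--decomposition strategy as the paper's proof (split $z^{q-1}(z-1)=d$ into a modulus equation and an argument equation, show no root can satisfy the argument equation for $0<\beta\le\pi/(q-1)$, and bound the modulus using $\cos\beta\le\cos(\pi/(q-1))$ and the identity $1+r^2-2r\cos\theta=(r-\cos\theta)^2+\sin^2\theta$). The one place where you genuinely diverge is the existence step. The paper fixes the radius $r$, sweeps $\beta$ from $0$ to $2\pi/q$, and observes that $(q-1)\beta+\arg(z-1)$ crosses $2\pi$; it then handles the modulus equation in a second, somewhat disconnected continuity argument, and the reader is left to combine the two. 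You instead parametrize by $\beta\in(\pi/(q-1),2\pi/q)$, use the strict monotonicity of $r\mapsto\arg(re^{i\beta}-1)$ (a fact the paper uses implicitly but you actually need and can verify via $\mathrm{Im}\,\tfrac{e^{i\beta}}{re^{i\beta}-1}=-\sin\beta/|re^{i\beta}-1|^2<0$) to solve the argument equation uniquely for $r(\beta)$, then apply the intermediate value theorem once to $\tilde d(\beta)$ after checking the boundary limits $r(\beta)\to0$ and $r(\beta)\to\infty$. This buys a cleaner, single--IVT existence argument that simultaneously pins down both the angle window and the modulus, at the small cost of having to verify the monotonicity of $\arg(re^{i\beta}-1)$ in $r$ and the continuity of $r(\beta)$ (which follows from that monotonicity but which you assert without proof). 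You also supply a calculus verification that $f$ is strictly increasing, which the paper states without proof; this is a worthwhile addition. Overall the two proofs are the same in spirit; yours is tighter on the existence step.
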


\begin{remark}
Notice that the bound provided by Lemma \ref{sharproots} improves the bound $1+d$, 
obtained for  all the roots $z_0, z_1,\dots,z_{q-1}$ in the proof of  Theorem \ref{teoQ}.
Further, the approximation $\sin x \approx x$ says that  the modulus of $z_1$ is  close to $d^{q-1}$.
\end{remark}

\begin{proof}
Consider $z= r e^{i\beta}$ with $r>0,  0 < \beta \le 2\pi/q$.
The roots  of $a_p$ must verify  $(q-1)\beta +\beta_{z-1}=0 \  \pmod{2\pi}$, where $\beta_{z-1}$ is the angle of $z-1$. 
Now fix a radius $r>0$.  and run $\beta$ between  0 and  $2\pi/q$.
For $\beta$ going to zero, $\beta_{z-1}$ approches to $\pi$, $\pi/2$ and zero, as long as $r<1, r=1$ and $r>1$, respectively.
In any case,  for $0< \beta\le \pi/(q-1)$,  one has   $\beta_{z-1} < \pi$, which yields 
$0<(q-1)\beta +\beta_{z-1} <2\pi$. So there are no roots for that range of $\beta$.
Further,  one has  $\beta_{z-1}>\beta.$ 
As $\beta$ goes to $2\pi/q$ one gets  $(q-1)\beta +\beta_{z-1} > 2\pi$. 
One concludes that there exists one  root with angle between  $\pi/(q-1)$ and $2\pi/q$. \\

We now  consider any  root of $a_p$ with angle $0<\beta\le\pi$.
 and find bounds for the radius $r$.
 The strategy is the following.
 For that fixed positive angle $\beta$, one gets that  $|z^{q-1}(z-1)|$,  at $z=0$ is equal to 0. Further, this  modulus, as a function of $r$, is not  increasing on $r$ but it is bounded from below by  an increasing and diverging function of $r$.  
Thus larger than $d$ for large enough $r$. A continuity argument let us conclude that there must me a solution for
$|z^{q-1}(z-1)|=d$.  An increasing  lower bound of the modulus of that root follows. 
Basic geometry gives
\begin{eqnarray*}
|z-1|
 &=&   \sqrt{  (r\sin\beta)^2 +  (1-r\cos\theta)^2  }  \\
&=&   \sqrt{  1+ r^2  -2 r \cos\beta  }  \\
&\ge& \sqrt{  1+ r^2  -2 r \cos(\pi/(q-1)) } . \\
\end{eqnarray*}
Last inequality follows since $\beta > \pi/(q-1)$. Basic calculus gives that  $f(r)$, as defined in the lemma,  
is increasing and diverges, thus greater  than $d$ for $|z|$  larger than   $f^{-1}(d)$.  
To get an explicit bound for $|z_1|$ one can compute the minimum of the last display, which is  $\sin(\pi/(q-1))$. 
Thus $|z^{q-1}(z-1)|\ge r^{q-1}\sin(\pi/(q-1))$ which is larger than $d$  for $r\ge   (d/\sin(\pi/(q-1)))^{1/(q-1)}$.
This ends the proof.
 \end{proof}


 The next lemma says that for the  roots of  $a_p$, the bigger the angle, the smaller the modulus.
 
 \begin{lema} \label{secondary}
Suppose $z'=r' e^{i\beta'}$ and $z"=r" e^{i\beta"}$ are roots of $a_p= z^q-z^{q-1}-d$  with $0\le \beta' < \beta"\le \pi$. 
Then $r"<r'$.
 \end{lema}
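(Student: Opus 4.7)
The plan is to prove $r'' < r'$ by combining a geometric injectivity-of-modulus argument with a continuous deformation in the parameter $d$.

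First I would show that any two distinct roots of $a_p$ lying in the closed upper half plane $\{z:\operatorname{Im} z\ge 0\}$ must have distinct moduli. Suppose $z_a\ne z_b$ are two such roots with $|z_a|=|z_b|=r$. From $z_i^{q-1}(z_i-1)=d$ for each, taking absolute values gives $|z_a-1|=|z_b-1|=d/r^{q-1}$, so both points lie on the intersection of the two circles $\{|z|=r\}$ and $\{|z-1|=d/r^{q-1}\}$. A direct computation ($x$-coordinate $(1+r^2-\rho^2)/2$, $y$-coordinate $\pm\sqrt{r^2-x^2}$) shows this intersection has at most two points, forming a complex-conjugate pair (or collapsing to a single real point in the tangent case). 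Hence at most one intersection point lies in the closed upper half plane, forcing $z_a=z_b$, a contradiction.

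Next I would fix an initial ordering as $d\to 0^+$. The real positive root is $z_0=1+d+O(d^2)$, so $r_0\approx 1$. The remaining $q-1$ roots bifurcate from $0$; writing $z=s(1+\epsilon)\omega$ with $\omega^{q-1}=-1$ and $s=d^{1/(q-1)}$, a first-order expansion gives $\beta_m=(2m+1)\pi/(q-1)+O(s)$ and $|z_m|=s+(s^2/(q-1))\cos((2m+1)\pi/(q-1))+O(s^3)$. Since the cosine is strictly decreasing as its argument increases in $(0,\pi)$ and $r_0$ dominates $s$ for $d$ small, the moduli of all roots in the closed upper half plane are strictly decreasing as $\beta$ increases, for every sufficiently small $d$.

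Finally I would propagate to all $d>0$ by continuity. The polynomial $a_p$ has only simple roots for every $d>0$, because the critical values of $F(z)=z^{q-1}(z-1)$ are $0$ and $-(q-1)^{q-1}/q^q$, neither positive; thus the roots depend continuously on $d$. The $\beta$-ranges $((2k+1)\pi/(q-1),\,(2k+2)\pi/q)$ of the non-real branches are pairwise disjoint (a short check based on $(2k_1+2)/q<(2k_2+1)/(q-1)$ when $k_1<k_2$), and the real roots at $\beta=0$ and, for even $q$, $\beta=\pi$ are fixed; hence the $\beta$-ordering of the roots in the closed upper half plane does not change with $d$. By the first step, the continuous functions $d\mapsto r_i(d)$ can never cross, so together with the initial ordering of the second step the strict inequality $r'>r''$ persists for all $d>0$. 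The main obstacle is the first step: one has to handle the tangent configurations carefully (which is exactly what happens at real positive and, for even $q$, real negative roots) to confirm that in every case the intersection of the two circles inside the closed upper half plane is a single point.
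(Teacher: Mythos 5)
Your proposal rests on the same key geometric observation as the paper --- that on a fixed circle $|z|=r$, the quantity $|z-1|$ (hence $|z^{q-1}(z-1)|$) is strictly increasing as the angle runs from $0$ to $\pi$, so two roots of $a_p$ in the closed upper half plane with equal modulus must coincide --- but you then wrap it in a perturbation-plus-continuity argument, whereas the paper goes directly from this observation to the monotone ordering in a single paragraph: fix $r$, note $r^{q-1}|z-1|=d$ forces a unique angle, and conclude that smaller angle corresponds to larger modulus. Your route is therefore genuinely different and considerably longer; what it buys is a more explicit tracking of the branches, but at the cost of a gap that would need to be filled.

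The gap is in your third step. You claim the angle of the $k$-th non-real branch remains in the interval $\bigl((2k+1)\pi/(q-1),\,(2k+2)\pi/q\bigr)$ for all $d>0$ and deduce from the disjointness of these intervals that the $\beta$-ordering never changes. But disjointness of the intervals alone does not force the angles to stay inside them; you would need a separate argument (a generalization of Lemma~\ref{sharproots}, which the paper only states for the branch of smallest positive angle, or an angle-noncrossing argument analogous to your modulus-noncrossing argument) to justify that the angle of each branch is a priori confined to its interval, or at least that two branches in the upper half plane can never share an angle. Without that, the ``$\beta$-ordering does not change'' assertion is unsupported, and the matching between modulus order and reverse angle order established at $d\to 0^+$ cannot be propagated. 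Filling this in is doable --- e.g., two roots on the same ray $\arg z=\beta\in(0,\pi)$ would force $\arg(z-1)$ to agree, which is impossible since $r\mapsto\arg(re^{i\beta}-1)$ is strictly decreasing --- but as written the step is incomplete, and once you have that lemma in hand the entire continuity machinery becomes unnecessary compared with the paper's one-line monotonicity argument.
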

 \begin{proof}
 Any root must verify $|z^{q-1}(z-1)|=d$. Fix $r>0$ and consider the circumference $|z|=r$. The previous equation becomes
 $r^{q-1} |z-1|=d$. For such circumference $r^{q-1}$ is fixed and $|z-1|$ increases as the angle of $z$ runs from 0 to $\pi$.
 Thus for the  roots of $a_p$, the smaller is the angle the larger is its modulus.
 This ends the proof.
  \end{proof}

The two lemmas above implies  that the bound for $|z_1|$ given by Lemma \ref{sharproots} is  a bound for  the moduli of the roots $z_1,...,z_{q-1}$. \\

The next lemma provides a lower bound for the moduli of the roots.

\begin{lema} \label{lower}
Let $z$ be  a root  of $a_p= z^q-z^{q-1}-d$ with $|z|=r$. Then
\[
r \ge \left\{
\begin{array}{lr}
(d/2)^{1/(q-1)} & {\rm  \ for \ } d\le 1 \\
(d/2)^{1/q} & {\rm  \ for \ } d\ge 1 
\end{array}
.  \right.
\]
\end{lema}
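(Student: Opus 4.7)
The plan is to extract a modulus inequality directly from the polynomial identity. If $z$ is a root with $|z|=r$, then $z^{q-1}(z-1)=d$, and taking moduli gives
$$r^{q-1}\,|z-1| = d.$$
Applying the triangle inequality $|z-1|\le r+1$ yields the master estimate
$$d \;\le\; r^{q-1}(r+1) \;=\; r^q + r^{q-1}. \qquad (\star)$$
This is the only analytic input needed; the remainder of the proof is purely a case analysis on which of the two terms on the right of $(\star)$ dominates.

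First I would split on $r$ rather than on $d$, since $(\star)$ suggests the natural dichotomy. If $r\ge 1$ then $r^{q-1}\le r^q$, so $(\star)$ gives $d\le 2r^q$, i.e.\ $r\ge (d/2)^{1/q}$. If $r<1$ then $r^q\le r^{q-1}$, so $(\star)$ gives $d\le 2r^{q-1}$, i.e.\ $r\ge (d/2)^{1/(q-1)}$. These two statements are the two bounds in the lemma, but indexed by $r$ instead of $d$.

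To convert into the $d$-indexed form of the statement, I would then argue regime by regime. For $d\le 1$: if $r\ge 1$ then $(d/2)^{1/(q-1)}\le (1/2)^{1/(q-1)}\le 1\le r$, while if $r<1$ the $r$-split above delivers exactly $r\ge (d/2)^{1/(q-1)}$; either way the first branch of the lemma holds. For $d\ge 1$: when $r\ge 1$ the $r$-split delivers $r\ge (d/2)^{1/q}$ directly; the remaining subcase $r<1$ together with $d\ge 1$ is constrained via $(\star)$ to $d\le r^q+r^{q-1}\le 2$, and here one must verify that $(d/2)^{1/(q-1)}$ already dominates $(d/2)^{1/q}$ for the value of $d$ in question, so the stronger-looking second branch still follows from the $r<1$ conclusion.

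The main obstacle is exactly this last bookkeeping step at the boundary $d\approx 1$, $r\approx 1$, where the two cases meet and where the form of the bound transitions; the triangle-inequality part is immediate, but matching the $r$-split to the $d$-split requires checking that no root is lost in the corner $r<1$, $d\in[1,2]$. If that corner turns out to require more than $(\star)$, I would sharpen the master estimate by writing $|z-1|^2=r^2+1-2r\cos\theta$ and using that any root's argument $\theta$ is bounded away from $0$ by Lemma \ref{sharproots}, which strengthens the triangle inequality enough to close the gap.
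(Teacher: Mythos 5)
Your strategy is identical to the paper's: extract $d = r^{q-1}|z-1| \le r^{q-1}(1+r)$ from the triangle inequality, split on $r<1$ or $r\ge 1$, then translate into a split on $d$. You are also right to flag the corner $r<1$, $1\le d<2$ as the delicate step. But neither of your proposed patches closes it, because the lemma is in fact false there. The dominance claim is reversed: for $1\le d<2$ the base $d/2$ lies in $(0,1)$, and since $1/(q-1)>1/q$ one has $(d/2)^{1/(q-1)}<(d/2)^{1/q}$, so the conclusion $r\ge (d/2)^{1/(q-1)}$ from the $r<1$ branch is strictly \emph{weaker} than the bound the second branch of the lemma asserts, not stronger. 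Your fallback --- sharpening $|z-1|\le 1+r$ via Lemma \ref{sharproots} --- also cannot work: that lemma pushes the argument $\beta$ of a secondary root away from $0$, whereas the triangle inequality is tight as $\beta\to\pi$, and for even $q$ the polynomial genuinely has a negative real root (since $a_p(-r)=r^q+r^{q-1}-d$), so $\beta=\pi$ actually occurs and the estimate cannot be improved.

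A concrete counterexample confirms the hole: with $q=2$, $d=1$, the secondary root is $z_1=(1-\sqrt{5})/2$, whose modulus $(\sqrt{5}-1)/2\approx 0.618$ is strictly below $(d/2)^{1/q}=1/\sqrt{2}\approx 0.707$. The paper's own proof tacitly passes over exactly the same gap: it asserts \emph{for $d\ge 1$ there are no roots with $r\le(d/2)^{1/q}$} without treating the subcase $r<1$, which is genuinely possible when $1\le d<2$. The natural repair is to move the switch from $d=1$ to $d=2$, where $(d/2)^{1/(q-1)}=(d/2)^{1/q}=1$: then $(d/2)^{1/(q-1)}$ covers all of $d\le 2$ (any root with $r\ge 1$ trivially exceeds it, since it is $\le 1$), and $(d/2)^{1/q}$ covers $d\ge 2$ (where $r<1$ is impossible because it would force $d\le r^{q-1}(1+r)<2$).
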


\begin{proof}
Following the proof of Lemma \ref{sharproots} 
\begin{eqnarray*}
|z-1|
&=&   \sqrt{  1+ r^2  -2 r \cos\beta  }  \\
&\le& \sqrt{  1+ r^2  -2 r \cos(\pi) }  \\
&=& r+1 . \\
\end{eqnarray*}
It follows that for $r\le1$  one gets $|z^{q-1}(z-1)| \le r^{q-1} 2$,
and  for $r\ge1$, one gets $|z^{q-1}(z-1)| \le 2r^{q} $.
In consequence, for $d\le 1$ there are no roots with $r\le(d/2)^{1/(q-1)}$, and for $d\ge 1$
  there are no roots with
$r\le(d/2)^{1/q}$.
This ends the proof.
\end{proof}

\subsection{Quartic and cubic polynomials}

{\sl The quartic minimum.}
To find a local minimum of a quartic polynomial one can use Cardano's formula to find the roots of its derivative, a cubic polynomial.

\begin{lema} \label{cardano}
The roots of a cubic polynomial  $x^3+a x^2+b x+c$ are given by  $x_{\rm root} = y_{\rm root} - a/3,$
where $y_{\rm root}$ are the roots of the reduced form $y^3 + \p y + \q $ with
\begin{eqnarray*}
\p &=&  b -\frac{a^2}{3}  \ ,  \\
\q &=&    2(\frac{a}{3})^3  - \frac{ab}{3} + c \  .
\end{eqnarray*}
 The roots are reached at
\[
y_{\rm root} =   \left[  - \frac{\q}{2} -  \left(  (\frac{\q}{2})^2 +   (\frac{\p}{3})^3  \right)^{1/2}    \right]^{1/3} +   
             \left[ -   \frac{\q}{2} +  \left(  (\frac{\q}{2})^2 +   (\frac{\p}{3})^3   \right)^{1/2}   \right]^{1/3}  .
\]
\end{lema}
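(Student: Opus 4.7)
The plan is to derive Cardano's formula by two classical substitutions. First I would apply the Tschirnhaus translation $x = y - a/3$ to the cubic $x^3 + ax^2 + bx + c$ in order to eliminate the quadratic term. A direct expansion yields
\[
\left(y-\tfrac{a}{3}\right)^3 + a\left(y-\tfrac{a}{3}\right)^2 + b\left(y-\tfrac{a}{3}\right) + c
= y^3 + \p y + \q,
\]
where collecting the coefficient of $y$ gives $\p = b - a^2/3$ and the constant term is $\q = 2(a/3)^3 - ab/3 + c$, exactly as stated. Hence every root $x_{\mathrm{root}}$ of the original polynomial has the form $y_{\mathrm{root}} - a/3$, reducing the problem to solving the depressed cubic $y^3 + \p y + \q = 0$.

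Next I would apply Cardano's trick: write $y = u + v$ with $u,v$ to be determined. Cubing gives $y^3 = u^3 + v^3 + 3uv(u+v) = u^3 + v^3 + 3uv\,y$, so the equation $y^3 + \p y + \q = 0$ becomes
\[
u^3 + v^3 + (3uv + \p)\, y + \q = 0.
\]
Imposing the auxiliary constraint $3uv = -\p$ (which is legitimate since we have one degree of freedom in choosing the pair $(u,v)$) eliminates the linear term and leaves the two conditions
\[
u^3 + v^3 = -\q, \qquad u^3 v^3 = -\p^3/27.
\]
By Vieta's theorem, $u^3$ and $v^3$ are therefore the two roots of the quadratic $t^2 + \q t - \p^3/27 = 0$. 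Solving this quadratic gives
\[
u^3, v^3 \;=\; -\frac{\q}{2} \pm \left(\Bigl(\tfrac{\q}{2}\Bigr)^{2} + \Bigl(\tfrac{\p}{3}\Bigr)^{3}\right)^{1/2}.
\]

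Finally I would take cube roots and sum. Choosing $u$ to be a cube root of the expression with the minus sign and $v$ a cube root of the expression with the plus sign (paired so that $uv = -\p/3$ is satisfied, which fixes the otherwise ambiguous choice among the three complex cube roots), one obtains
\[
y_{\mathrm{root}} \;=\; \left[-\tfrac{\q}{2} - \Bigl(\bigl(\tfrac{\q}{2}\bigr)^{2} + \bigl(\tfrac{\p}{3}\bigr)^{3}\Bigr)^{1/2}\right]^{1/3}
+ \left[-\tfrac{\q}{2} + \Bigl(\bigl(\tfrac{\q}{2}\bigr)^{2} + \bigl(\tfrac{\p}{3}\bigr)^{3}\Bigr)^{1/2}\right]^{1/3},
\]
which is the claimed formula, and then $x_{\mathrm{root}} = y_{\mathrm{root}} - a/3$. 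The only subtle point, and what I would expect to be the main caveat rather than an obstacle, is the pairing of cube roots: among the three complex choices for each radical one must select the pair so that their product equals $-\p/3$; the other two roots of the depressed cubic are then obtained by multiplying $u$ by a primitive cube root of unity and $v$ by its conjugate.
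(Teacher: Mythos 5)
Your derivation is correct and is the standard textbook proof of Cardano's formula: depress the cubic by the translation $x=y-a/3$, substitute $y=u+v$, impose the constraint $3uv=-\p$ so that $u^3,v^3$ solve the resolvent quadratic $t^2+\q t-\p^3/27=0$, and take cube roots paired so that $uv=-\p/3$. The paper gives no proof of this lemma (it is stated in the appendix "for completeness" as a classical formula), so there is nothing to contrast. Your remark on selecting the compatible pair of cube roots, and that the remaining two roots arise from the primitive cube roots of unity, is the right caveat and is exactly why the displayed formula, read literally, produces only one of the three roots; this is a slight imprecision in the lemma's statement itself rather than in your argument.
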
 

{\sl The cubic minimum.}
The next lemma  presents   bounds for both, the local minimum and its position, of an  homogeneous cubic  polynomial.

 \begin{lema} \label{cubgen}
 Let $e(x)=a x^3+b x^2-\delta x$, with $a, b$ and $\delta$ positives. 
 Let $x_{\min}$ be the point of the local mimimum of this polynomial.
 Then
 \begin{itemize}
 \item[(a)]  ${\delta}/{2\sqrt{b^2+3a\delta}}  \le  x_{\min} \le  \delta/2b$,
\item[(b)]  $  -\frac{ \delta^2 }{ \sqrt{b^2 + 3a\delta} +b}  \le 
 e(x_{\min})  \le -\frac{ \delta^2 }{2( \sqrt{b^2 + 3a\delta} +b)} $.
 \end{itemize}
 \end{lema}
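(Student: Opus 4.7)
The plan is to locate $x_{\min}$ explicitly by solving $e'(x)=0$ and then translate the resulting closed-form expression into the two-sided bounds of (a); afterward, use the critical-point identity to rewrite $e(x_{\min})$ in a form where each bound in (b) is a one-line consequence of (a).

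First I would compute $e'(x)=3ax^{2}+2bx-\delta$. Because $a,b,\delta>0$, this upward parabola has exactly one positive root, and it is the local minimum of $e$. By the quadratic formula and a standard rationalization,
\[
x_{\min}=\frac{-b+\sqrt{b^{2}+3a\delta}}{3a}
=\frac{\delta}{b+\sqrt{b^{2}+3a\delta}}.
\]
The two inequalities of part (a) are then immediate from the trivial estimate $b\le\sqrt{b^{2}+3a\delta}$: replacing $b$ by $\sqrt{b^{2}+3a\delta}$ in the denominator gives the lower bound $\delta/2\sqrt{b^{2}+3a\delta}$, and replacing $\sqrt{b^{2}+3a\delta}$ by $b$ gives the upper bound $\delta/2b$.

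For part (b), the lower bound is very cheap: since $a,b,x_{\min}\ge 0$, we have $ax_{\min}^{3}+bx_{\min}^{2}\ge 0$, so
\[
e(x_{\min})\;\ge\;-\delta\,x_{\min}\;=\;-\frac{\delta^{2}}{b+\sqrt{b^{2}+3a\delta}},
\]
using the closed form for $x_{\min}$. For the matching upper bound I would exploit the critical-point identity $3ax_{\min}^{2}=\delta-2bx_{\min}$, which lets one rewrite
\[
e(x_{\min})=x_{\min}\Bigl(ax_{\min}^{2}+bx_{\min}-\delta\Bigr)
=\frac{x_{\min}\bigl(bx_{\min}-2\delta\bigr)}{3}.
\]
Now the upper bound $x_{\min}\le\delta/2b$ from part (a) yields $bx_{\min}\le\delta/2$, hence $bx_{\min}-2\delta\le-3\delta/2$, so
\[
e(x_{\min})\;\le\;-\frac{\delta\,x_{\min}}{2}\;=\;-\frac{\delta^{2}}{2\bigl(b+\sqrt{b^{2}+3a\delta}\bigr)},
\]
which is exactly the claimed upper bound.

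There is no real obstacle here: the only non-routine step is spotting that the critical-point relation collapses the cubic $e(x_{\min})$ into the linear-in-$x_{\min}$ expression $x_{\min}(bx_{\min}-2\delta)/3$, after which (a) immediately delivers both sides of (b). All other manipulations are the elementary inequality $b\le\sqrt{b^{2}+3a\delta}$ and the positivity of $a,b,\delta,x_{\min}$.
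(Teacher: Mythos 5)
Your proof is correct. Part (a) matches the paper exactly: both start from $e'(x)=3ax^{2}+2bx-\delta$, take the positive root, and rationalize to $x_{\min}=\delta/(b+\sqrt{b^{2}+3a\delta})$, from which (a) is immediate.

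For part (b) you take a genuinely different, somewhat more economical route. The paper factors $e(x)=x\bigl(e'(x)-2ax^{2}-bx\bigr)$, so that at the critical point $e(x_{\min})=-x_{\min}^{2}(2ax_{\min}+b)$; it then computes $2ax_{\min}+b=\tfrac{1}{3}\bigl(2\sqrt{b^{2}+3a\delta}+b\bigr)$ exactly from the closed form for $x_{\min}$ and sandwiches this quantity between $\tfrac{1}{2}\bigl(\sqrt{b^{2}+3a\delta}+b\bigr)$ and $\sqrt{b^{2}+3a\delta}+b$ to get both sides of (b) at once. You instead get the lower bound from the bare positivity estimate $ax_{\min}^{3}+bx_{\min}^{2}\ge0$, and for the upper bound you use the critical-point identity $3ax_{\min}^{2}=\delta-2bx_{\min}$ to reduce $e(x_{\min})$ to $x_{\min}(bx_{\min}-2\delta)/3$ and then feed in $x_{\min}\le\delta/2b$ from (a). The net effect is the same two bounds, but your derivation is asymmetric (each side gets its own argument) and leans on (a), whereas the paper's is symmetric (one identity plus one two-sided comparison of $2ax_{\min}+b$) and stands independently of (a). Your version makes the lower bound look almost trivial, which is a nice observation; the paper's version makes transparent where the factor of $2$ separating the two bounds comes from. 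Either is a perfectly acceptable proof.
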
 
 
 \begin{proof}
  The largest root of the derivative of a polynomial of the form $e(x)=a x^3+b x^2-\delta x$, with $a, b$ and $\delta$ positives, which gives the position of the local minimum of $e(x)$ is    
 \begin{equation} \label{exact}
 x_{\min}=   \frac{\sqrt{ 4b^2+12a\delta}-2b}{6a}  = \frac{\sqrt{b^2+3 a\delta}-b}{3a} .
 \end{equation}  
A direct computation  gives 
  \[
 x_{\min} =  \frac{\delta}{\sqrt{   b^2 + 3a\delta} +b } \ .
\]
Item (a) follows immediately. \\


 
  Now, notice that
  \[
   e(x) 
  =  x(e'(x)-2ax^2-bx)  \ .
  \]
  Since $x_{\min}$ is a root of $e'$
  \[e(x_{\min}) = -x^2_{\min} \ (2ax_{\min}+b)  . \]
  By (\ref{exact})
\[   2ax_{\min}+b     =  \frac{2\sqrt{   b^2 + 3a\delta} +b}{3}  \ ,
 \]  
which is  upper and lower bounded by $\sqrt{b^2 + 3a\delta} +b$, and its half, respectively.
One concludes that 
 \[
 -\frac{ \delta^2  }{ \sqrt{b^2 + 3a\delta} +b } \le    e(x_{\min}) \le -\frac{ \delta^2 }{2( \sqrt{b^2 + 3a\delta} +b)} \ . 
   \]

  This ends the proof.

  \end{proof}

 \subsection{Vandermonde matrix}
  
  It is well known that the stability of the norm of the inverse of a Vadnermonde matrix depenqs on the distance between each pair of
coefficients that   generates  the matrix.
 We quote here a result  due to  Gautschi \cite{gautschi}  for easy reference for the reader.
  
  Set the norm of a $q\times q$ matrix  $A=(a_{ji}), \ 1\le  j.i\le q$ by
  $$
  ||A|| =  \max_{1\le j\le q} \sum_{i=1}^{q} |a_{ji}| \ .
  $$
  \begin{teo} \label{Vandermonde}
  Let $V=V(z_0,\dots, z_{q_1})$ be a Vandermonde with all the $z_j$ differents.
  Then,  
\[
||V^{-1}|| \le \max_{0\le j\le q-1} \prod_{i\not=j}  \frac{1+|z_i|}{ |z_j-z_i| } \ .
\]
 \end{teo}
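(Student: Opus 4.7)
The plan is to recognise $V^{-1}$ as the matrix of Lagrange interpolation coefficients and control its row sums by the standard coefficient bound for polynomials with prescribed roots. First I would observe that the system $Vc = y$ is the polynomial interpolation problem $\sum_k c_k z_k^j = y_j$ for $0 \le j \le q-1$. Introducing the Lagrange polynomials $L_j(z) = \prod_{k \neq j}(z-z_k)/\prod_{k\neq j}(z_j-z_k)$, characterised by $L_j(z_k) = \delta_{jk}$, and expanding $L_j(z) = \sum_m \ell_{jm} z^m$, the identity $L_j(z_k) = \delta_{jk}$ becomes $\sum_m \ell_{jm} z_k^m = \delta_{jk}$. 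Thus the matrix with entries $\ell_{jm}$ is a left inverse of $V$, and by squareness equals $V^{-1}$; its $j$-th row is precisely the list of coefficients of $L_j$.

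The norm in the statement is the maximum row sum, so $\|V^{-1}\| = \max_j \sum_m |\ell_{jm}|$. For fixed $j$ I would decompose $L_j$ into its numerator and its (scalar) denominator, and bound the sum of the moduli of the coefficients of the numerator $P_j(z) := \prod_{k \neq j}(z - z_k)$. Writing $P_j(z) = \sum_m p_m^{(j)} z^m$, the coefficients $p_m^{(j)}$ are, up to signs, elementary symmetric functions in $\{z_k : k \neq j\}$, so
$$\sum_m |p_m^{(j)}| \;\le\; \prod_{k \neq j}(1 + |z_k|),$$
because the right-hand side equals the value at $z = 1$ of the majorising polynomial $\prod_{k \neq j}(z + |z_k|)$, whose coefficients termwise dominate the absolute values of those of $P_j$. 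Dividing by the positive constant $\prod_{k \neq j}|z_j - z_k|$ yields $\sum_m |\ell_{jm}| \le \prod_{k \neq j}(1 + |z_k|)/|z_j - z_k|$, and taking the maximum over $j$ gives the stated bound.

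There is no serious obstacle here. The only point that requires a little care is the index orientation: one must line up the Lagrange identification so that the \emph{rows} of $V^{-1}$ (not its columns) carry the Lagrange coefficients, since that is what the induced $\ell^\infty$ operator norm picks up in the definition of $\|\cdot\|$. The hypothesis that the $z_j$ are distinct is exactly what makes every $L_j$ well-defined and what prevents the denominators $|z_j - z_k|$ in the final bound from vanishing.
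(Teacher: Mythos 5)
Your proof is correct, but note first that the paper does not actually prove this theorem: it is quoted verbatim from Gautschi \cite{gautschi} ``for easy reference for the reader,'' with no argument supplied. So there is no paper proof to compare against. What you have produced is a faithful reconstruction of Gautschi's original argument, and it is sound throughout.

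The key steps all check out against the paper's conventions. With $V_{jk}=z_k^j$ (rows indexed by powers, columns by nodes), the matrix $L$ whose $(j,m)$ entry is the coefficient of $z^m$ in the Lagrange basis polynomial $L_j$ satisfies $(LV)_{jk}=L_j(z_k)=\delta_{jk}$, so $L=V^{-1}$ and the $j$-th row of $V^{-1}$ carries the coefficients of $L_j$. Since the paper's $\|\cdot\|$ is the maximum absolute row sum, $\|V^{-1}\|=\max_j\sum_m|\ell_{jm}|$, which is exactly what your decomposition targets. Your majorisation $\sum_m|p_m^{(j)}|\le\prod_{k\ne j}(1+|z_k|)$, obtained by comparing $\prod_{k\ne j}(z-z_k)$ coefficientwise with $\prod_{k\ne j}(z+|z_k|)$ and evaluating at $z=1$, is precisely the lemma Gautschi uses. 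Your remark about index orientation is well taken: had the paper defined $V$ as the transpose (nodes along rows), the Lagrange coefficients would sit in the columns of $V^{-1}$ and the $\ell^\infty$ row-sum norm would pick up a different, generally weaker, quantity. The hypothesis of distinct $z_j$ is, as you note, exactly what makes the denominators nonvanishing and the $L_j$ well defined. No gaps.
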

 
 We derive a corollary to bound the norm  inverse of the Vandermonde matrix in the specific case of Theorem\ref{teoQ} and  Theorem \ref{teoR}.
 
 \begin{cor}  \label{VandermondeCor}

 Let $V$ be a Vandermonde matrix with entries given by the roots of the polynomial $a_p(z)=z^q-z^{q-1}-d,$ with  positive integer $q$ and positive $d$. Then 
 \[
 ||V^{-1}|| \le  (1+r_1)^{q-2}  \max_{1 \le j \le q-1} \left\{  \frac{1+r_1  }{   |a'_p( z_{j})|}, \frac{ 2+d  }{   |a'_p( z_0) |  } \right\} \ ,
 \]
 where $r_1$ is the modulus of the root with the smallest positive angle.
  \end{cor}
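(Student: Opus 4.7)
The plan is to apply Gautschi's bound (Theorem \ref{Vandermonde}) directly and control both the denominators and the numerators of the resulting products using the spectral information on the roots of $a_p$ already collected in the appendix.

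First I would identify the denominators. Since $a_p(z)=\prod_{i=0}^{q-1}(z-z_i)$, differentiation gives $a'_p(z_j)=\prod_{i\ne j}(z_j-z_i)$, so each product $\prod_{i\ne j}|z_j-z_i|$ appearing in Theorem \ref{Vandermonde} coincides with $|a'_p(z_j)|$. This already matches the denominators in the statement and requires no further work.

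For the numerators $\prod_{i\ne j}(1+|z_i|)$ I would use two facts already established. Lemma \ref{realproot} gives $z_0<1+d$, so the factor contributed by the positive real root is bounded by $2+d$. Lemma \ref{sharproots} combined with Lemma \ref{secondary} yields $|z_j|\le r_1$ for every secondary root $j=1,\dots,q-1$, so each of the remaining factors is bounded by $1+r_1$. I would then split Gautschi's outer maximum into two cases: when the excluded index is $j=0$, the product is made up of $q-1$ secondary factors, each bounded by $1+r_1$, giving a contribution proportional to $(1+r_1)^{q-1}/|a'_p(z_0)|$; when the excluded index is some $j\ge 1$, the product consists of the single factor $1+z_0\le 2+d$ together with $q-2$ secondary factors bounded by $1+r_1$, giving $(2+d)(1+r_1)^{q-2}/|a'_p(z_j)|$. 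Pulling the common factor $(1+r_1)^{q-2}$ out of the maximum produces exactly the bound in the corollary.

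I do not foresee a genuine obstacle: the argument is essentially a substitution into Gautschi's bound. The subtle point is simply the bookkeeping of which factor in each product comes from the real root and which from the secondary roots; the key substantive input is the sharp estimate $|z_j|\le r_1$ on the secondary roots from Lemma \ref{sharproots}, which replaces the cruder bound $1+d$ furnished by Rouch\'e and is precisely what allows $(1+r_1)^{q-2}$ to appear as a clean common factor in both cases of the maximum.
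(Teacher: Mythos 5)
Your overall strategy is the same as the paper's: apply Gautschi's bound (Theorem~\ref{Vandermonde}), identify the denominators with $|a'_p(z_j)|$, and control the numerators using Lemma~\ref{realproot} for the real root and Lemmas~\ref{sharproots} and~\ref{secondary} for the secondary ones. All of these steps are sound.

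However, your claim that pulling out $(1+r_1)^{q-2}$ \textquotedblleft produces exactly the bound in the corollary\textquotedblright\ does not hold: it produces the two terms with the numerators swapped relative to what the corollary prints. Your own computation shows that excluding $j=0$ gives $(1+r_1)^{q-1}/|a'_p(z_0)|$ and excluding $j\ge1$ gives $(2+d)(1+r_1)^{q-2}/|a'_p(z_j)|$; after factoring out $(1+r_1)^{q-2}$ the resulting bound is
\[
||V^{-1}|| \le (1+r_1)^{q-2}\,\max_{1\le j\le q-1}\left\{\frac{1+r_1}{|a'_p(z_0)|},\ \frac{2+d}{|a'_p(z_j)|}\right\},
\]
whereas the corollary pairs $1+r_1$ with $|a'_p(z_j)|$ and $2+d$ with $|a'_p(z_0)|$. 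A sanity check at $q=2$ confirms your version: Gautschi gives $\max\{(1+|z_1|)/|a'_p(z_0)|,\ (1+|z_0|)/|a'_p(z_1)|\}$, and bounding $|z_1|$ by $r_1$ and $|z_0|$ by $1+d$ places $1+r_1$ over $|a'_p(z_0)|$ and $2+d$ over $|a'_p(z_1)|$. So the corollary as printed has the numerators transposed; your derivation is correct and silently fixes it, but you should flag the discrepancy rather than assert exact agreement.
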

\begin{proof}
This follows directly using the argument in the analysis of the constants in the proof of Theorem \ref{teoQ}
and the bounds in Lemma \ref{realproot} and Lemma \ref{sharproots}.


\end{proof}

\section*{Acknowledgements}
This article was produced as part of the activities of  FAPESP  research projects 
 2023/13073-8  "Modelagem de Sistemas Estoc\'asticos".



  \end{document}